\documentclass[11pt]{amsart}
\hoffset=-0.6in
\voffset=-0.6in
\textwidth=6in
\textheight=9in
\usepackage{mathrsfs}
\usepackage{amsfonts}
\usepackage{amsthm}
\usepackage{amssymb}
\usepackage{latexsym}
\usepackage{tikz}
\usetikzlibrary{calc,shapes.arrows,decorations.pathreplacing, patterns}
\newtheorem{theorem}{Theorem}
\newtheorem{lemma}{Lemma}
\newtheorem{corollary}{Corollary}

\newtheorem{proposition}{Proposition}

\begin{document}

\title{$L^{2}$-Sobolev theory for the complex Green operator}

\author{S\'{e}verine Biard}
\author{Emil J.~Straube}

\address{Department of Mathematics Texas A\&M University College Station, Texas, 77843}
\email{biards@math.tamu.edu}
\email{straube@math.tamu.edu}

\thanks{2000 \emph{Mathematics Subject Classification}: 32W10, 32V20}
\keywords{Complex Green operator, $\overline{\partial}_{M}$, CR-submanifold of hypersurface type, closed range, compactness, Sobolev estimates}
\thanks{Supported in part by Qatar National Research Fund Grant NPRP 7-511-1-98 .}

\date{June 1, 2016; revised May 1, 2017}

\begin{abstract}
These notes are concerned with the $L^{2}$-Sobolev theory of the complex Green operator on pseudoconvex, oriented, bounded and closed CR-submanifolds of $\mathbb{C}^{n}$ of hypersurface type. This class of submanifolds generalizes that of boundaries of pseudoconvex domains. We first discuss briefly the CR-geometry of general CR-submanifolds and then specialize to this class. Next, we review the basic $L^{2}$-theory of the tangential Cauchy--Riemann operator and the associated complex Green operator(s) on these submanifolds. After these preparations, we discuss recent results on compactness and regularity in Sobolev spaces of the complex Green operator(s).
\end{abstract}

\maketitle

\section{Introduction}

This paper gives an overview of some recent developments concerning estimates (compactness, Sobolev) for the complex Green operator on CR-submanifolds of $\mathbb{C}^{n}$ of hypersurface type. These submanifolds are assumed compact, without boundary, and pseudoconvex; except in section \ref{Compact}, they are also assumed orientable. On the one hand, results in the $L^{2}$-Sobolev theory for the complex Green operator on this type of CR-submanifold by now more or less mirror those for the $\overline{\partial}$-Neumann operator on a pseudoconvex domain. On the other hand, many new issues arise, and interesting new ideas are required. 

Such a submanifold $M$ can locally be graphed over an actual hypersurface. In section \ref{CRsubmanifolds}, we show how the observation that the local graphing functions are CR-functions on that hypersurface combines with results on extendability of CR-functions to construct a one-sided complexification $\widehat{M}$ of $M$. Once $\widehat{M}$ is in place, one can `fill in the hole', at the expense of going outside $\mathbb{C}^{n}$, to obtain a complex manifold whose boundary is $M$. These results are used in section \ref{L-2} to obtain closed range in $L^{2}$ for $\overline{\partial}_{M}$, the Cauchy--Riemann operator on $M$. In this section, we then recall the basic facts about the $L^{2}$-theory and the complex Green operator. Concerning compactness, discussed in section \ref{Compact}, differences to the $\overline{\partial}$-theory on pseudoconvex domains emerge. Namely, compactness of the complex Green operator holds at symmetric form levels, and compactness does not necessarily pass to higher form levels; both are in contrast to what happens for the $\overline{\partial}$-Neumann operator. Once these general facts are 
explained, we introduce the familiar potential theoretic sufficient condition for compactness of the $\overline{\partial}$-Neumann operator, known as property ($P$), and show that when assumed at symmetric form levels, it is also a sufficient condition for compactness of the Green operator. The section concludes with a discussion of some geometric sufficient conditions for compactness. In this case the assumptions are independent of the form level, so that there is no need to `symmetrize' them. The final section, section \ref{Sobolev}, is dedicated to Sobolev estimates for the complex Green operator. The main result is that if a certain one-form on $M$ is exact on the null space of the Levi form, then Sobolev estimates do hold. This form is exact on the null space of the Levi form when $M$ is defined by a set of plurisubharmonic functions. It is also exact when $M$ is strictly pseudoconvex except for a simply connected complex submanifold. More generally, this form is closed on the null space of the 
Levi form, and so induces a DeRham cohomology class on a complex submanifold. It suffices that this class vanishes. In particular, a complex annulus in $M$ may or may not be bad for Sobolev estimates, while a disc is always benign.

Our survey originates with a set of notes, taken by the first author, of four lectures given by the second author at the January 2016 School in Complex Analysis and Geometry at the Tsinghua Sanya International Mathematics Forum, Sanya, China. What we present here is a substantially expanded version. The support of the Mathematics Forum is gratefully acknowledged.

\section{CR-submanifolds of hypersurface type}\label{CRsubmanifolds}

In this section, we give a brief introduction to the geometry of Cauchy--Riemann (CR-) submanifolds of $\mathbb{C}^n$, and to CR-functions and mappings. This introduction concentrates on what is needed to discuss the construction of a one-sided complexification of an oriented smooth bounded compact pseudoconvex CR-submanifold of $\mathbb{C}^{n}$ of hypersurface type, started in \cite{MunasingheStraube12} and completed in \cite{Baracco1}. For background on CR-submanifolds, we refer the reader to the books \cite{Boggess91, BER99, Zampieri08}.

\smallskip

Denote by $J$ the complex structure on $\mathbb{C}^n\simeq \mathbb{R}^{2n}$. Here, the identification is the standard one, i.e $(x_{1}+iy_{1}, \hdots, x_{n}+iy_{n}) \simeq (x_{1}, y_{1}, \hdots, x_{n}, y_{n})$. Then $J(z_{1},\hdots, z_{n}) = (iz_{1},\hdots, iz_{n})$; equivalently, $J(x_{1}, y_{1}, \hdots, x_{n}, y_{n}) = (-y_{1}, x_{1}, \hdots, -y_{n}, x_{n})$. 

Let $M$ be a smooth submanifold in $\mathbb{C}^n$.
We denote by $T_pM$ the real tangent space to $M$ at the point $p\in M$.
$T_pM$ is in general not invariant under the complex structure $J$; we define \emph{the complex tangent space to $M$ at $p\in M$} to be the largest subspace of $T_pM$ that is invariant under $J$. That is, we set
\begin{equation}\label{comptan}
T_p^{\mathbb{C}}M:=T_pM\cap JT_pM\;.
\end{equation}
Note that $T_{p}^{\mathbb{C}}M$ is naturally a complex subspace of $\mathbb{C}^{n}$; in particular, it is a vector space over $\mathbb{C}$ (compare also \eqref{complextangent} below).

A submanifold $M$ in $\mathbb{C}^n$ is called a \textit{CR-submanifold} of $\mathbb{C}^n$ if its CR-dimension, $\text{dim}_{\mathbb{C}}T_p^{\mathbb{C}}M$, is independent of $p\in M$. In this case, we define the \emph{CR-codimension of $M$} to be the codimension of $T^{\mathbb{C}}M$ inside $TM$ (which is also independent of the point).

Here are some examples of CR-submanifolds.

\begin{enumerate}
\item A totally real submanifold is a CR-submanifold: $T_p^{\mathbb{C}}M=\lbrace{0}\rbrace$ for every $p\in M$.
\item Any real hypersurface is a CR-submanifold: $\text{dim}_{\mathbb{C}}T_p^{\mathbb{C}}M = (n-1)$ for every $p\in M$.
\item A complex submanifold is also a CR-submanifold: $T_pM=T^{\mathbb{C}}_pM$ for every $p\in M$.
\item The boundary of a smooth complex submanifold $S\subset\mathbb{C}^{n}$: $T_{p}^{\mathbb{C}}bS = (m-1)$ for every point, where $m$ is the dimension of $S$.
\end{enumerate}

In a neighborhood $U$ of a point $p\in M$, a smooth CR-submanifold of $\mathbb{C}^n$ of real codimension $k\geq1$ can be given by a set of \textit{defining functions} as $M = 
\lbrace{z\in U\mid \rho_1(z)=\dots=\rho_{k}(z)=0}\rbrace$,
where $\lbrace{\rho_i}\rbrace_{1\leq i\leq k}$ are smooth real-valued functions in $U$ verifying $d\rho_1\wedge\dots\wedge d\rho_k\neq 0$ on $M$. Then we have
\begin{equation}\label{complextangent}
(\zeta_1,\dots, \zeta_n)\in T^{\mathbb{C}}_pM \Leftrightarrow \sum_{j=1}^n\dfrac{\partial\rho_i}{\partial z_j}(p)\zeta_j=0,\quad 1\leq i\leq k.
\end{equation}
Indeed, if $\zeta_{j} = \xi_{j}+i\eta_{j}$, $1\leq j\leq n$, then taking the real part of the right-hand side of \eqref{complextangent} says that $\zeta = (\zeta_{1}, \hdots, \zeta_{n}) = (\xi_{1}, \eta_{1}, \hdots, \xi_{n}, \eta_{n})$ belongs to $T_{p}M$, whereas taking the imaginary part says that $J\zeta = (-\eta_{1}, \xi_{1}, \hdots, -\eta_{n}, \xi_{n})$ belongs to $T_{p}M$.
It is important to note that in general, not all $k$ equations in \eqref{complextangent} will be linearly independent over $\mathbb{C}$ (although the real differentials $d\rho_{1}, \hdots, d\rho_{k}$ are linearly independent over $\mathbb{R}$). 

\smallskip

A vector field $X(z) = \sum_{j=1}^{n}a_{j}(z)\partial/\partial z_{j}$ is called of \emph{type $(1,0)$}, $Y(z) = \sum_{j=1}^{n}b_{j}(z)\partial/\partial\overline{z_{j}}$ is called of \emph{type $(0,1)$}. Note that 
$X\rho_{s}(z)=0,\; 1\leq s\leq k \Leftrightarrow \sum_{j=1}^{n}a_{j}(z)\partial\rho_{s}/\partial z_{j}(z) = 0\,, \,1\leq s\leq k \; \Leftrightarrow X(z) = (a_{1}, \hdots, a_{n}) \in T^{\mathbb{C}}_{z}M.$ Likewise, $Y\rho_s(z) = 0,\; 1\leq s\leq k\Leftrightarrow \overline{Y(z)} = (\overline{b_{1}}, \hdots, \overline{b_{n}}) \in T^{\mathbb{C}}_{z}M$.
In this situation, we say that $X \in T^{1,0}M$, and $Y\in T^{0,1}M$. We will sometimes abuse notation and use $T^{1,0}_{z}M$ (the fiber at $z$ of the bundle $T^{1,0}M$) in lieu of $T^{\mathbb{C}}_{z}M$ (the subspace of $T_{z}M$). They are naturally isomorphic. Care is needed, however; see items (ii) and (iii) in Lemma \ref{CR2} below.

\smallskip
We are now in a position to define \emph{CR-functions} and \emph{CR-mappings}.
Let $f$ be a function of class ${C}^1$ on a CR-submanifold $M$ of $\mathbb{C}^n$.
We say that $f$ is CR if and only if $\bar{L}f(z)=0$ for all  $z\in M$ and $L(z)\in T^{1,0}_zM$.
This set of equations is generally referred to as `the' (homogeneous) \emph{tangential Cauchy--Riemann equations}. CR-functions may be thought of as being holomorphic in complex tangential directions.
In particular, the simplest examples are restrictions of holomorphic functions; they satisfy the full Cauchy--Riemann equations, and so satisfy in particular the tangential ones. There are, however, CR-manifolds and CR-functions on them that at a particular point need not be extendible into any open neighborhood as an analytic function. The question whether or not CR-functions can be extended, perhaps to one side only, or only to a wedge, is highly nontrivial. We refer the reader to \cite{Boggess91, BER99, Tumanov96} and their references for more information on this interesting subject.

Let $M, N$ be CR-submanifolds of $\mathbb{C}^{n_{1}}$ and $\mathbb{C}^{n_{2}}$, respectively, and $f:M\to N$, $ z \mapsto (f_1(z), \dots f_{n_{2}}(z))$, be a ${C}^1$ map. $f$ is a \emph{CR-map} if $f_1,\dots, f_{n_{2}}$ are CR-functions on $M$.
The push forward $f_{*}$ (where $f$ is considered as a map between two real manifolds) maps $T_{z}M$ into $T_{f(z)}N$. If $f$ is a CR-map, the tangential Cauchy--Riemann equations for the functions $f_{j}$ show that $f_{*}$ (acting on $T_{z}M$) maps $T^{\mathbb{C}}_{z}M$ into $T^{\mathbb{C}}_{f(z)}N$, and this map is complex linear i.e $f_{*}$ commutes with $J$: $f_{*}|_{T_{z}^{\mathbb{C}}M}\circ J_{\mathbb{C}^{n_{1}}} = J_{\mathbb{C}^{n_{2}}}\circ f_{*}|_{T_{z}^{\mathbb{C}}M}$. Similarly, when $f_{*}$ acts on $T^{1,0}_{z}M\subset T_{z}M\otimes\mathbb{C}$,\footnote{See \cite{Boggess91}, section 7.1 for a discussion of the role of the complexified tangent bundle $TM\otimes\mathbb{C}$. In particular, note that $T_{z}^{1,0}M = T_{z}^{1,0}\mathbb{C}^{n}\cap T_{z}M\otimes\mathbb{C}$.} we have $f_{*}(T_{z}^{1,0}M)\subset T_{f(z)}^{1,0}N$. It is an elementary, but basic fact of CR-geometry that these properties characterize CR-maps:
\begin{lemma}\label{CR2}
The following are equivalent:

(i) $f$ is a CR-map.

(ii) $f_{*}(T_{z}^{\mathbb{C}}M) \subset T_{f(z)}^{\mathbb{C}}N$, and this map is $\mathbb{C}$-linear, for all $z\in M$.

(iii) $f_*({T_z^{1,0}M}) \subseteq T^{1,0}_{f(z)}N$, for all $z \in M$.
\end{lemma}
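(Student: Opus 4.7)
The plan is to establish (i) $\Leftrightarrow$ (iii) by a direct computation in coordinates, and (iii) $\Leftrightarrow$ (ii) by a purely linear-algebraic translation between the two descriptions of the complex tangent space; together these give the full equivalence.

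For (i) $\Leftrightarrow$ (iii), I would take $L = \sum_{j} a_{j}\,\partial/\partial z_{j} \in T^{1,0}_{z}M$ and apply $f_{*}(L)$ to a smooth test function on $\mathbb{C}^{n_{2}}$. The chain rule yields
\[
f_{*}(L) \;=\; \sum_{k}(Lf_{k})\,\frac{\partial}{\partial w_{k}} \;+\; \sum_{k}(L\bar{f}_{k})\,\frac{\partial}{\partial \bar{w}_{k}}.
\]
Since $f(M) \subseteq N$, $f_{*}$ sends $T_{z}M \otimes \mathbb{C}$ into $T_{f(z)}N \otimes \mathbb{C}$, and by the footnote characterization $T^{1,0}_{f(z)}N = T^{1,0}_{f(z)}\mathbb{C}^{n_{2}} \cap T_{f(z)}N \otimes \mathbb{C}$. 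Thus $f_{*}(L) \in T^{1,0}_{f(z)}N$ is equivalent to the vanishing of the $\partial/\partial \bar{w}_{k}$ components, i.e.\ to $L\bar{f}_{k} = \overline{\bar{L}f_{k}} = 0$ for every $k$. Imposing this for every $L \in T^{1,0}_{z}M$ and every $z \in M$ is precisely the statement that each $f_{k}$ is a CR-function on $M$, which is (i).

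For (iii) $\Leftrightarrow$ (ii), I would exploit the natural real-linear isomorphism $\phi\colon T^{1,0}_{z}M \to T^{\mathbb{C}}_{z}M$, $\phi(L) = L + \bar{L}$, which intertwines multiplication by $i$ on $T^{1,0}_{z}M$ with the action of $J$ on $T^{\mathbb{C}}_{z}M$, since $J(L+\bar{L}) = iL - i\bar{L} = iL + \overline{iL}$. Because $f$ is a real mapping, the $\mathbb{C}$-linear extension of $f_{*}$ to the complexified tangent bundles commutes with complex conjugation, so $f_{*}(\bar{L}) = \overline{f_{*}(L)}$. Assuming (iii), every $X = L + \bar{L} \in T^{\mathbb{C}}_{z}M$ satisfies $f_{*}(X) = f_{*}(L) + \overline{f_{*}(L)} \in T^{\mathbb{C}}_{f(z)}N$, and $f_{*}(JX) = if_{*}(L) + \overline{if_{*}(L)} = J f_{*}(X)$ gives the $\mathbb{C}$-linearity, proving (ii). Conversely, assuming (ii), write the decomposition $f_{*}(X) = M + \bar{M}$ with $M \in T^{1,0}_{f(z)}N$; combining $f_{*}(X) = f_{*}(L) + f_{*}(\bar{L})$ with $f_{*}(JX) = Jf_{*}(X)$ forces $f_{*}(L) = M \in T^{1,0}_{f(z)}N$, which is (iii).

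The main potential pitfall is bookkeeping between the two models of the complex tangent space that the lemma statement itself flags: $T^{\mathbb{C}}_{z}M$ consists of genuine real tangent vectors equipped with the intrinsic complex structure $J$, while $T^{1,0}_{z}M$ consists of complex vector fields of type $(1,0)$ sitting inside $T_{z}M \otimes \mathbb{C}$. Once the isomorphism $\phi$ and its intertwining property with $J$ are used consistently, all three conditions translate mechanically into one another, and no deeper input is needed beyond the chain rule and the reality of $f$.
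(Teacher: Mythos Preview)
Your argument is correct. The chain rule computation for (i) $\Leftrightarrow$ (iii) is the standard one, and your linear-algebraic translation via $L \mapsto L+\bar L$ cleanly handles (ii) $\Leftrightarrow$ (iii); the only place requiring care is the step $f_{*}(\bar L)=\overline{f_{*}(L)}$, which you correctly justify by the reality of $f$.

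The paper itself does not prove the lemma at all: it simply cites Definition~1, Lemma~1, and Theorem~1 of section~9.2 in Boggess's book \emph{CR Manifolds and the Tangential Cauchy--Riemann Complex}. Your write-up is thus more self-contained than the paper's own treatment. In substance, however, the argument you give is the same as the one found in that reference (chain rule for (i) $\Leftrightarrow$ (iii), and the identification $T^{\mathbb{C}}M \cong T^{1,0}M$ intertwining $J$ with multiplication by $i$ for (ii) $\Leftrightarrow$ (iii)), so there is no genuinely different route here---you have simply unpacked the citation.
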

In (iii), $f_{*}$ is complex linear on ${T_z^{1,0}M}$ as well, but in contrast to (ii), we do not need to \emph{assume} that it is. To see that in (ii) it is necessary to make this assumption, consider a diffeomorphism $f$ from a complex submanifold to another. Then trivially $f_{*}(T_{z}^{\mathbb{C}}M)\subset T_{f(z)}^{\mathbb{C}}N$, since both equal the whole tangent space. Yet such an $f$ need not be holomorphic.
\begin{proof}[Proof of Lemma \ref{CR2}:]
The lemma is a combination of Definition 1, Lemma 1, and Theorem 1 of \cite{Boggess91}, section 9.2, where details are provided.
\end{proof}



Lemma \ref{CR2} immediately gives: 
\begin{lemma}\label{CR3}
Let  $M$ and $N$ as above such that the CR-dimensions of M and N are equal and let $f:M\to N $ be a CR-diffeomorphism. Then $f^{-1}$ is also CR. 
\end{lemma}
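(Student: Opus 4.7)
The plan is to use characterization (iii) of Lemma \ref{CR2}: it suffices to show that $(f^{-1})_{*}$ maps $T^{1,0}_{w}N$ into $T^{1,0}_{f^{-1}(w)}M$ for every $w\in N$. Since $f$ is a CR-map, Lemma \ref{CR2}(iii) already gives $f_{*}(T^{1,0}_{z}M)\subseteq T^{1,0}_{f(z)}N$ at every $z\in M$.

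The key observation is that because $f$ is a diffeomorphism of real manifolds, the push-forward $f_{*}:T_{z}M\to T_{f(z)}N$ is a real linear isomorphism. Extending it complex linearly to the complexified tangent spaces, $f_{*}:T_{z}M\otimes\mathbb{C}\to T_{f(z)}N\otimes\mathbb{C}$ is a $\mathbb{C}$-linear isomorphism, with inverse $(f^{-1})_{*}$. In particular, its restriction to the subspace $T^{1,0}_{z}M$ is complex linear and injective, and lands in $T^{1,0}_{f(z)}N$ by the CR assumption on $f$.

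Now invoke the equal CR-dimension hypothesis: $\dim_{\mathbb{C}}T^{1,0}_{z}M=\dim_{\mathbb{C}}T^{1,0}_{f(z)}N$. An injective complex linear map between finite-dimensional complex vector spaces of equal dimension is a bijection, so $f_{*}:T^{1,0}_{z}M\to T^{1,0}_{f(z)}N$ is onto. Consequently, $(f^{-1})_{*}$, as a $\mathbb{C}$-linear map of the complexified tangent spaces, sends $T^{1,0}_{f(z)}N$ exactly onto $T^{1,0}_{z}M$. Writing $w=f(z)$, this says $(f^{-1})_{*}(T^{1,0}_{w}N)\subseteq T^{1,0}_{f^{-1}(w)}M$, which by Lemma \ref{CR2}(iii) means $f^{-1}$ is a CR-map.

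There is no real obstacle; the only subtlety is conceptual, namely that (iii) of Lemma \ref{CR2} is the convenient formulation because it involves only an inclusion of $(1,0)$-spaces (no separate $\mathbb{C}$-linearity hypothesis, which is automatic since $T^{1,0}$ is a complex subspace of the complexification and $f_{*}$ extended is $\mathbb{C}$-linear). Trying instead to argue from (ii) would require one to separately check $\mathbb{C}$-linearity of $(f^{-1})_{*}$ on $T^{\mathbb{C}}N$, which, while true, is less direct; the diffeomorphism example of two complex submanifolds mentioned after Lemma \ref{CR2} is precisely the cautionary tale that shows why the equal-CR-dimension hypothesis is needed to promote the inclusion $f_{*}(T^{1,0}_{z}M)\subseteq T^{1,0}_{f(z)}N$ to a bijection.
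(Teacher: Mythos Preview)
Your proof is correct and is precisely the argument the paper has in mind: the paper simply says ``Lemma \ref{CR2} immediately gives'' Lemma \ref{CR3}, and you have spelled out exactly how, using characterization (iii) together with injectivity of $f_{*}$ and the equal CR-dimension hypothesis to upgrade the inclusion to a bijection.
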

Lemma \ref{CR3} is false without the assumption that the CR-dimensions of $M$ and $N$ are equal. A simple example may be found in \cite{BER99}, page 50.

\smallskip

Finally, we define the class of CR-submanifolds that we are most interested in in this article. We say that \emph{$M$ is of hypersurface type} if the CR-codimension of $M$ is $1$. If $(m-1)$ is the CR-dimension of $M$, then the real dimension of $M$ is $(2m-1)$.

A CR-submanifold of hypersurface type $M$ can be represented in local coordinates as a graph over an actual hypersurface in $\mathbb{C}^{m}$, $m\leq n$, as follows. Fix $p \in M$. Choose coordinates so that $p$ corresponds to the origin, $(z_{1}, \hdots, z_{m-1})$ span $T^{\mathbb{C}}_{p}M$, the $x_{m}$-axis is tangent to $M$ at $p$ and (real) orthogonal to the  $(z_{1}, \hdots, z_{m-1})$ affine subspace, and the positive $y_{m}$-direction is given by $Je_{m}$. where $e_{m}$ is the unit vector in the positive $x_{m}$-direction. This is illustrated in the figure below, except that we have moved $p$ away from the origin in order to have a less crowded picture. 

\vspace{0.15in}

\begin{center}
\begin{tikzpicture}
 \draw[->] (0,0,0) --  (0,3.5,0) node[above] {$z_{m+1},\dots, z_n$};
   \draw[domain= -2:2] plot (\x, 0, \x*\x);
   \node[left] at (2.6,0,2) {$\pi(M)$};
\node[left=0.2cm] at (0.7,3.2,0) {$p$};
\draw[domain=-2:2] plot (\x, 3,\x*\x);
  \draw[->] (0,0,0)  --  (3,0,0) node[right=0.1cm] {$z_1, z_2,\dots, z_{m-1}, x_m$};
 
   \draw[->,dashed] (0,0,0) --  (0,0,-3) node[above] {$y_m$}; 
   \draw[dashed] (0,1.5,0)--(0,3,0);
\node at (0.6, 2, -1) {$M$};
\draw[->] (-1, 1.5, 0) -- (-1,0,0) node[left, midway] {$\pi$};
\draw[->] (-2,0,0)--(-2, 1.5, 0) node[left, midway] {$(\pi|_{M})^{-1}$};
\end{tikzpicture}
\end{center}

\vspace{0.15in}

\noindent Now consider the projection $\pi : \mathbb{C}^n \longrightarrow \mathbb{C}^m$,
$(z_1,\dots, z_n) \longmapsto (z_1,\dots, z_m, 0,\dots, 0)$. $\pi(M)$ is a smooth hypersurface in $\mathbb{C}^{m}$, and $M$ is a graph over $\pi(M)$, given by 
\begin{equation}\label{map}
(\pi|_{M})^{-1}:\;(z_1,\dots, z_m, 0,\dots, 0)\mapsto (z_1,\dots, z_m, h_1(z_1,\dots,z_m),\dots, h_{n-m}(z_1,\dots,z_m))\;,
\end{equation}
where the functions $h_j$ are smooth on $\pi(M)$. However, more can be said: the functions $h_{j}$ are CR-functions on the hypersurface $\pi(M)$. Indeed, $\pi|_{M}$ is a CR-map (the projection is holomorphic) onto $\pi(M)$, and it is a diffeomorphism. By Lemma \ref{CR3}, so is its inverse. This means that the mapping functions are CR-functions. (Compare also \cite{Zampieri08}, Theorem 3.3.10.) This simple fact is essential in establishing the one-sided complexification in Theorem \ref{theo1} below.

\smallskip

From now on, we assume that $M$ is orientable. This means that there is a global section of unit length in $TM$ that is orthogonal to $T^{\mathbb{C}}_{z}M$ at each point $z$ of $M$. It is convenient to take this field, which we denote by $T$, to be purely imaginary (so that we are actually working in $TM\otimes\mathbb{C}$). The following Hermitian form on $T^{1,0}M$ plays an important role in CR-geometry. Let $p\in M$ and $X,Y\in T^{1,0}_pM$. Then
\begin{equation}\label{Levi}
 [X, \overline{Y}]_p=\lambda(X,\overline{Y})(p) T(p) \quad \text{mod}\; T^{1,0}_pM\oplus T^{0,1}_pM\;.
 \end{equation}
The Hermitian form $\lambda$ is \emph{the Levi form of $M$} (the reason for choosing $T$ purely imaginary is to make $\lambda$ Hermitian rather than skew Hermitian).

M is \emph{pseudoconvex} if $\lambda$ is positive semidefinite at all points of $M$, and
\emph{strictly pseudoconvex} if $\lambda$ is positive definite at all points of $M$. Should this happen with negative semidefinite and negative definite respectively, we can replace $T$ by $-T$ to revert to the former situation. These notions are of course generalizations of the corresponding notions for a hypersurface, where their significance is well known. It turns out that they are also appropriate for the problems discussed in this survey. Further elaborating on the meaning and significance of the Levi form and of pseudoconvexity would take us too far afield; again, \cite{Boggess91, BER99, Zampieri08} are excellent sources for further information. 

\smallskip

The main result of this section is the following theorem. The existence of the one-sided complexification is in \cite{Baracco1}, the second part comes from \cite{StraubeZeytuncu15}.
\begin{theorem}\label{theo1}
Let $M$ be an orientable, smooth, compact, pseudoconvex CR-submanifold of hypersurface type in $\mathbb{C}^n$. Then $M$ has a one-sided complexification to a complex submanifold $\widehat{M}$ of $\mathbb{C}^n$ (a `strip'), so that $M$ is the connected pseudoconvex component of the boundary of $\widehat{M}$. Moreover, if $U$ is any neighborhood of $M$, then near $M$, $\widehat{M}$ is contained in the hull of $M$ with respect to the functions that are plurisubharmonic in $U$.
\end{theorem}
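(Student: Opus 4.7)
My plan is to split the argument into three parts: a local one-sided complex extension, a globalization, and the plurisubharmonic hull statement.

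First I would work locally. Around any $p \in M$, the coordinate setup in the passage just before the theorem writes $M$ as the graph of CR-functions $h_{1}, \dots, h_{n-m}$ over a smooth hypersurface $\pi(M) \subset \mathbb{C}^{m}$. Via the CR-diffeomorphism $\pi|_{M}$ (Lemma~\ref{CR3}), the CR-structure of $M$ transfers to $\pi(M)$, and pseudoconvexity of $M$ translates into pseudoconvexity of $\pi(M)$ on one of its two sides; orientability of $M$ fixes which side. By the classical one-sided extension theorem for CR-functions on a pseudoconvex hypersurface (via attached analytic discs, or Lewy-type arguments), each $h_{j}$ extends to a holomorphic $\widetilde{h}_{j}$ on the pseudoconvex side. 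The graph of $(\widetilde{h}_{1}, \dots, \widetilde{h}_{n-m})$ over that side is a local complex submanifold of $\mathbb{C}^{n}$ of complex dimension $m$ whose boundary contains $M$ near $p$.

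Next, I would globalize. On the overlap of two local charts, holomorphic one-sided extensions of CR-boundary data are unique, so the local strips must agree there. Orientability of $M$, through the global imaginary vector field $T$, together with pseudoconvexity, picks out a consistent pseudoconvex side chart-by-chart, so the local strips glue into a connected complex submanifold $\widehat{M} \subset \mathbb{C}^{n}$ having $M$ as the pseudoconvex component of its boundary. This is the content of \cite{Baracco1}.

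For the plurisubharmonic hull statement, I would attach analytic discs to $\pi(M)$ and lift them to $\widehat{M}$. Given $z \in \widehat{M}$ sufficiently close to $M$, the projection $\pi(z)$ lies just inside the pseudoconvex side of $\pi(M) \subset \mathbb{C}^{m}$, and standard analytic-disc theory on a pseudoconvex hypersurface yields a small analytic disc $f : \overline{\mathbb{D}} \to \mathbb{C}^{m}$ with $f(0) = \pi(z)$ and $f(\partial \mathbb{D}) \subset \pi(M)$. Its lift
\[
F(\zeta) = \bigl(f(\zeta),\, \widetilde{h}_{1}(f(\zeta)),\, \dots,\, \widetilde{h}_{n-m}(f(\zeta))\bigr)
\]
is an analytic disc in $\widehat{M} \cup M$ with $F(0) = z$ and $F(\partial \mathbb{D}) \subset M$. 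For $z$ close enough to $M$ one can arrange $F(\overline{\mathbb{D}}) \subset U$, and then for any $\varphi \in \mathrm{PSH}(U)$, subharmonicity of $\varphi \circ F$ on $\mathbb{D}$ and the maximum principle give $\varphi(z) \le \sup_{M} \varphi$, which is precisely the hull bound.

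The main obstacle I anticipate is the globalization step: different local projections $\pi$ in different charts \emph{a priori} produce independent strips, and one must verify that the `pseudoconvex sides' in neighboring charts are consistently oriented and that the extensions match on overlaps. This is where the combination of orientability of $M$ and pseudoconvexity is essential, and the careful verification is the core content of \cite{Baracco1}.
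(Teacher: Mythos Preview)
Your proposal has a genuine gap, and it is not where you think it is. The step ``By the classical one-sided extension theorem for CR-functions on a pseudoconvex hypersurface \ldots\ each $h_{j}$ extends holomorphically'' is false as stated: pseudoconvexity of $\pi(M)$ alone does \emph{not} guarantee one-sided holomorphic extension of CR-functions. The Kneser--Lewy theorem requires a nonvanishing Levi eigenvalue, and Tr\'{e}preau's theorem requires that $\pi(M)$ contain no germ of an $(m-1)$-dimensional complex manifold through $\pi(p)$. At a point where $M$ (equivalently $\pi(M)$) is Levi-flat---and nothing in the hypotheses excludes this---neither applies, and CR-functions need not extend locally to either side. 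For the same reason, your disc-attachment argument for the hull statement (``standard analytic-disc theory on a pseudoconvex hypersurface yields a small analytic disc with boundary in $\pi(M)$'') breaks down at such points.

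The paper identifies this as the real difficulty. Its proof first treats the easy case (strictly pseudoconvex points, or more generally points satisfying Tr\'{e}preau's condition) exactly as you do. The crucial additional idea, due to Baracco, is that \emph{extendibility of CR-functions propagates along complex tangential curves} (CR-orbits), and that a compact pseudoconvex orientable $M$ of hypersurface type consists of a single CR-orbit. One fixes a strictly pseudoconvex point $z_{0}$ (which always exists by a shrinking-sphere argument), obtains the local extension there, and then propagates this extendibility along CR-curves to every other point of $M$, including the Levi-flat ones. The analytic-disc methods used in this propagation are also what yield the plurisubharmonic hull statement via the maximum principle. So the hard part is not gluing compatible local strips---that is comparatively routine once they exist---but producing a local strip at all near the weakly pseudoconvex points.
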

\begin{proof}[Sketch of the proof of Theorem \ref{theo1}]
The situation is most elementary near a strictly pseudoconvex point, and we describe in detail how to construct the desired one-sided complexification locally. 

So let $p \in M$ be a strictly pseudoconvex point. Then $\pi(p)$ is a strictly pseudoconvex point of $\pi(M)$. Near $\pi(p)$, the pseudoconvex side of $\pi(M)$ can be filled by analytic discs with boundaries in $\pi(M)$. Moreover, any CR-function on $\pi(M)$ extends analytically, via extension along these discs, to this side. This is the Kneser--Lewy extension theorem, see for example \cite{Boggess91}, sections 14.1 and 15.2 (and \cite{Kneser36, Range02} for why this terminology is more appropriate than just `Lewy extension theorem').
In particular, the graphing (CR-) functions $h_{1}, \hdots, h_{(n-m)}$ extend holomorphically to the pseudoconvex side of $M$. These extensions, say $\widehat{h}_{1}, \hdots, \widehat{h}_{(n-m)
}$, then define an $m$-dimensional complex submanifold $\widehat{M}$ of $\mathbb{C}^{n}$ whose boundary near $p$ equals $M$:
\begin{equation}\label{extend}
\widehat{\pi^{-1}}: \;(z_{1}, \hdots, z_{m}) \longrightarrow \left(z_{1}, \hdots, z_{m}, \widehat{h}_{1}(z_{1}, \hdots, z_{m}), \hdots, \widehat{h}_{(n-m)}(z_{1}, \hdots, z_{m})\right) \;.
\end{equation}
$\widehat{\pi^{-1}}$ lifts the discs that fill out the pseudoconvex side of $\pi(M)$ near $\pi(p)$ to analytic discs with boundaries in $M$, and these analytic discs sweep $\widehat{M}$ near $p$. This geometric situation is illustrated in the figure below. 

\begin{center}
\begin{tikzpicture}
  \draw[->] (0,0,0) --  (0,3.5,0) node[above=0.1cm] {$z_{m+1},\dots, z_n$};

 \draw[domain= -2:2] plot (\x, 0, \x*\x);
   \shade[top color=gray!85, bottom color=white, domain=-2:2] plot (\x, 0,\x*\x);
\draw[domain=-2:2] plot (\x, 3,\x*\x);
 \shade[top color=gray!30, bottom color=white, domain=-2:2] plot (\x, 3,\x*\x);
 \node[ right=0.1cm] at (0,3.2,0) {$p$};
  \draw[->] (0,0,0)  --  (3,0,0) node[right=0.1cm] {$z_1, z_2,\dots, z_{m-1}, x_m$};
   \draw[->, dashed] (0,0,0) --  (0,0,-3) node[above] {$y_m$}; 
   \draw[domain=-1.75:0.41, color=blue] plot (\x,2,-0.5-0.5*\x*\x);
   \draw[domain=-1.68:0.33, color=blue] plot (\x,2,-0.8-0.5*\x*\x);
   \draw[domain=-1.9:0.55, color=blue] plot (\x,2,-0.5*\x*\x);
    \draw[domain=-1.5:0.19, color=blue] plot (\x,2,-1.2-0.5*\x*\x);
   \node[left] at (2.9,0,3) {$\pi(M)$};
\draw[domain=-1.75:0.4, color=blue] (-1.75, -1, -0.5-0.5*1.75*1.75)--(0.4,-0.95, -0.5-0.5*0.4*0.4); 
\draw[domain=-1.75:0.4, color=blue] (-1.95, -1.1, -0.5-0.5*1.76*1.76)--(0.39,-1.1, -0.5-0.5*0.4*0.4); 
\draw[domain=-1.78:0.4, color=blue] (-2.2, -1.2, -0.5-0.5*1.78*1.78)--(0.39, -1.23, -0.5-0.5*0.4*0.4); 
\draw[domain=-1.78:0.4, color=blue] (-2.38, -1.25, -0.5-0.5*1.75*1.75)--(0.38, -1.35, -0.5-0.5*0.4*0.4); 
\draw[dashed] (0, 1.5,0)--(0,3,0);
\node at (0.6, 2, -1) {$M$};
\draw[->] (-1, 1.5, 0) -- (-1,0,0) node[left, midway] {$\pi$};
\draw[->, color=blue] (-2,0,0)--(-2, 1.5, 0) node[left, midway] {$\widehat{(\pi|_{M})^{-1}}$};
\end{tikzpicture}
\end{center}

\vspace{0.15in}
\noindent By the maximum principle for plurisubharmonic functions, $\widehat{M}$ is contained in the plurisubharmonic hull of $M$ with respect to the plurisubharmonic functions in $U$ (possibly after shrinking $\widehat{M}$).

Returning now to the general situation, one may first notice that one-sided extension of CR-functions of course holds under a much weaker assumption than strict pseudoconvexity. For $p\in M$, this property holds near $\pi(p)$ if (and only if) $\pi(M)$ contains no germ of a complex manifold of dimension $(m-1)$ through $\pi(p)$ (\cite{Trepreau06}). Equivalently, $M$ contains no germ of an $(m-1)$-dimensional complex manifold through $p$. Under this condition, one should be able to piece together such local extensions of $M$ to obtain a global extension. This strategy goes back to \cite{MunasingheStraube12} and works to construct $\widehat{M}$ (however, without the second statement in Theorem \ref{theo1}) in this case.
When $M$ does contain $(m-1)$-dimensional manifolds, and Tr\'{e}preau's theorem cannot be applied to obtain a local extension everywhere, this strategy runs into a nontrivial problem.
Baracco's crucial insight (\cite{Baracco1}) is that one can exploit the fact that extendibility of a CR-function propagates along analytic discs, in fact, along complex tangential curves (\cite{Baracco1, HangesTreves83, Tumanov95, Tumanov96}), to overcome this problem. Making this idea precise takes some work. In particular, care has to be taken to use extension results based on analytic disc methods in order to obtain the second statement in Theorem \ref{theo1} (via the maximum principle).

A sketch is as follows. First, note that $M$ consists of a single CR-orbit; in other words, any two points on $M$ can be connected by a piecewise smooth curve whose tangent at each point $z$ belongs to $T^{\mathbb{C}}_{z}M$ (\cite{BerhanuMendoza97}, Corollary 4.4, \cite {Baracco1}, Proposition 2.1). Next, pick and fix a strictly pseudoconvex point $z_{0}\in M$. We can always find such point by enclosing $M$ in a large sphere and shrinking the radius until the sphere touches $M$. Then, near $z_{0}$, we have a local extension of $M$ as above. Now take an arbitrary point $z_{1}$ on $M$, and connect $z_{0}$ to $z_{1}$ by a 
piecewise smooth CR-curve, say $\gamma(t)$, $0\leq t \leq 1$. 
Using extendibility results from \cite{BER99}, chapter 8 and from \cite{Tumanov95}, which are all based on analytic disc techniques, one can now show that this local extension property of $M$, including the fact that near $M$, the extension is contained in the (local) plurisubharmonic hull of $M$,  propagates along $\gamma$ from $z_{0}$ to $z_{1}$. There are numerous technicalities to take care of. For these, we refer the reader to \cite{Baracco1, StraubeZeytuncu15}.
\end{proof}
Once we have the `strip' manifold $\widehat{M}$, we can say more. Namely, as long as we are willing to `go outside $\mathbb{C}^{n}$', $M$ actually bounds a complex manifold in the $C^{\infty}$ sense \cite{Baracco2, Baracco3}).

\begin{corollary}\label{cor1}
Assume $M$ is as in Theorem \ref{theo1}. Then $M$ is the boundary, in the $C^{\infty}$ sense, of a complex manifold (which need not be a submanifold of $\mathbb{C}^{n}$).
 \end{corollary}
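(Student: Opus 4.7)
The plan is to take the one-sided complexification $\widehat{M}$ from Theorem~\ref{theo1} and close off its non-$M$ boundary by attaching an abstract complex-manifold cap, using that we are allowed to leave $\mathbb{C}^n$. Concretely, I would proceed in three steps: truncate $\widehat{M}$ along a strictly pseudoconvex ``inner parallel'' $M'$ of $M$, build an abstract complex-manifold fill $V$ of $M'$ on the convex side, and glue along $M'$.

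First I would construct the truncation. Since $M$ is the pseudoconvex boundary of $\widehat{M}$, any smooth defining function $\rho$ of $M$ in $\widehat{M}$ (with $\rho=0$ on $M$ and $\rho<0$ on $\widehat{M}$) satisfies: for $C>0$ sufficiently large, $\tilde\rho := \rho + C|z|^2$ is strictly plurisubharmonic on a neighborhood of $M$ in $\widehat{M}$. (Alternatively one could use the Diederich--Fornaess bounded-exhaustion construction, but the $C|z|^2$ trick suffices locally.) For small $\epsilon>0$ the level set $M':=\{\tilde\rho = -\epsilon\}$ is then a smooth, compact, strictly pseudoconvex hypersurface in the complex $m$-manifold $\widehat{M}$, and $\overline{\Omega}:=\{-\epsilon\le\tilde\rho\le 0\}$ is a compact complex $m$-manifold with smooth boundary $M\cup M'$.

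Next I would cap $M'$. It is a compact, smooth, strictly pseudoconvex CR-submanifold of hypersurface type, so in principle it bounds an abstract complex fill on its convex side. When the CR-dimension of $M'$ is at least $2$, Boutet de Monvel embedding together with Harvey--Lawson produces an abstract Stein fill, and since $M'$ already has a smooth complex-manifold collar on its convex side inside $\widehat{M}$ (obtained by enlarging $\widehat{M}$ slightly past $M'$), this fill can be taken to be a genuine complex manifold $V$, not just a variety. In CR-dimension $1$ I would instead argue directly using Grauert bumps: start with the smooth complex collar just mentioned and push its inner boundary inward by repeated strictly pseudoconvex bumps built from $\tilde\rho$, closing off in the limit to obtain $V$.

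Finally I would glue $\overline{\Omega}$ to $V$ along $M'$. Both sides induce the same CR-structure on $M'$ (the one inherited from $\widehat{M}$), so the gluing produces an abstract $C^\infty$ complex manifold $X$ whose smooth boundary is exactly $M$. The hard step is clearly the capping: producing $V$ as a genuine \emph{manifold} (rather than a singular Stein variety) and matching it smoothly to the collar, which is especially delicate in CR-dimension $1$ where Harvey--Lawson/Rossi fills can exhibit isolated singularities. Avoiding them requires essential use of the pre-existing smooth complex collar of $M'$ on its convex side inside $\widehat{M}$, and formalizing this is precisely what is carried out in \cite{Baracco2, Baracco3}.
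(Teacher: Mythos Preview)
Your route differs from the paper's and has real gaps. The paper does not truncate along a strictly pseudoconvex inner hypersurface and then cap; it directly invokes Rothstein's classical extension theorem (\cite{Rothstein59}, Satz~2 in section~4; see also \cite{Baracco3}) to continue the strip $\widehat{M}$ to an abstract complex \emph{variety} $\mathcal{M}$ with boundary $M$. Because $\mathcal{M}$ agrees with the smooth strip $\widehat{M}$ near $M$, its singular locus is a compact analytic set disjoint from $M$, hence a finite set of isolated points; blowing these up yields the desired smooth complex manifold. No intermediate $M'$, no Harvey--Lawson fill, no separate CR-dimension-one argument, and no gluing are needed.

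In your sketch, several steps do not work as written. First, the level sets of $\tilde\rho=\rho+C|z|^{2}$ are not inner parallels of $M$: on $M$ one has $\tilde\rho=C|z|^{2}>0$, so $M\not\subset\{\tilde\rho=0\}$ and $\{\tilde\rho=-\epsilon\}$ lies deep inside $\widehat{M}$ (if it meets $\widehat{M}$ at all); a genuine Diederich--Forn\ae ss-type construction is what is required here, not the additive $C|z|^{2}$ trick. Second, and more seriously, the capping is misdescribed. The Rossi/Harvey--Lawson fill of a strictly pseudoconvex $M'$ is in general a \emph{variety} with possible interior singularities, and the existence of a smooth complex collar of $M'$ inside $\widehat{M}$ does nothing to prevent singularities that sit far from $M'$. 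The paper's resolution is not to ``avoid'' singularities via the collar but simply to blow them up once they are known to be isolated and away from the boundary. Finally, your Grauert-bump argument in CR-dimension one is not a proof of compact filling: bumping lets you push a strictly pseudoconcave boundary inward step by step, but nothing in your outline forces the process to close off; that closing-off is exactly the nontrivial content supplied by the Rothstein-type extension the paper invokes.
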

\begin{proof}[Proof (from \cite{Baracco2, Baracco3}).]
Consider the one-sided complexification (the `strip') $\widehat{M}$ of $M$ provided by Theorem \ref{theo1}.
We would like to continue $\widehat{M}$ to a complex variety. This situation is classical and belongs to the circle of ideas studied by the German school of several complex variables in the fifties and sixties, see  
\cite{Rothstein59}, \cite{RothsteinSperling66}. It turns out that this extension can indeed be achieved, albeit not necessarily within $\mathbb{C}^{n}$. That is, there is a complex variety $\mathcal{M}$ which extends the strip $\widehat{M}$ (\cite{Rothstein59}, Satz 2 in section 4 and its proof). A sketch of a proof can also be found in \cite{Baracco3}, section 3. Because $\mathcal{M}$ extends $\widehat{M}$, its singularities stay away from its boundary $M$ and thus form a compact analytic set. Therefore, they are isolated, and there are only finitely many. By blowing 
them up, we get the desired complex manifold.
\end{proof}
The variety in the above proof is not to be confused with the Harvey--Lawson variety from \cite{HarveyLawson75}, which is \emph{immersed} in $\mathbb{C}^{n}$ such that $M$ is its boundary in the sense of currents. Because it is only immersed, its singularities need not stay away from its boundary $M$. For further clarification of these matters, see \cite{LukYau98, Fefferman00, HarveyLawson00, Baracco3}.

\section{The $L^2$-theory for the $\bar\partial_M$-complex}\label{L-2}

Let $M$ be a CR-submanifold in $\mathbb{C}^n$ of hypersurface type that is orientable, smooth, and closed. In this section, we briefly recall the $\overline{\partial}_{M}$-complex in the context of $L^{2}$ spaces of forms, and we explain the main fact, namely that $\overline{\partial}_{M}$ has closed range in $L^{2}$. We then recall the basic facts about the complex Green operator that follow. Further information and background for this section may be found in \cite{Boggess91, BER99, Zampieri08, Shaw85, ChenShaw01, Nicoara06}.

\smallskip

We denote by $\Lambda^{0,q}T^*(\mathbb{C}^n)$ the bundle of smooth $(0,q)$-forms on $\mathbb{C}^n$ and by $\Lambda^{0,q}T^*(\mathbb{C}^n)|_{M}$ its coefficient wise restriction to $M$. This restriction is to be distinguished from the restriction as forms. 

Locally, $M$ is defined by $k$ defining functions $\rho_{1}, \hdots, \rho_{k}$, where $k=2n-2m+1$ is the (real) codimension of $M$ in $\mathbb{C}^{n}$, and the gradients $\nabla\rho_{1}(z), \hdots, \nabla\rho_{k}(z)$ are linearly independent over $\mathbb{R}$ at points $z\in M$. Denote by $I^{0,q}$ the ideal generated by $\lbrace{\overline{\partial}\rho_1,\dots,\overline{\partial}\rho_k}\rbrace$.
Then $\Lambda^{0,q}T^*M$ defined by $\Lambda^{0,q}T^*M(z):=\left((I^{0,q})|_{M}(z)\right)^{\perp}$ (inside $\Lambda^{0,q}T^*(\mathbb{C}^n)|_{M}(z)$) is \emph{the bundle of $(0,q)$-forms on $M$}. The orthogonal complement is taken with respect to the standard pointwise inner product for forms that declares the $d\overline{z_{J}}$, $|J|=q$, to form an orthonormal basis. Because $M$ is CR,
the dimension of the (complex) span of $\{\nabla\rho_{1}(z), \hdots, \nabla\rho_{k}(z)\}$ is independent of $z$, so that we have indeed defined a vector bundle locally. However, $\left((I^{0,q})_{\mid_M}(z)\right)^{\perp}$ is independent of the choice of defining functions, so that $\Lambda^{0,q}T^*M$ is well defined globally. 

Let $t_M: \Lambda^{0,q}T^*(\mathbb{C}^n)|_{M}\rightarrow \Lambda^{0,q}T^*M$ be the orthogonal projection. For a form $f$, we will often use the notation $f_{t_{M}}$ for $t_{M}f$. $f_{t_{M}}$ (or $t_{M}f$) is called the \emph{tangential part} of $f$. 

We are now ready to define the \emph{tangential Cauchy--Riemann operator} and the associated \emph{$\overline{\partial}_{M}$-complex} $: \Lambda^{0,q}T^*M\rightarrow \Lambda^{0,q+1}T^*M$.
Let $f\in\Lambda^{0,q}T^*M$ and $\widetilde{f}$ be an extension of $f$ into a neighborhood of $M$ (by extending each coefficient function). Then we set
\begin{equation}\label{dbarM}
\bar\partial_Mf=(\bar\partial \widetilde{f})_{t_M}\;.
\end{equation}
$\bar\partial_Mf$ is well defined; one checks that if $\widetilde{f_{1}}$ and $\widetilde{f_{2}}$ are two extensions of $f$, then 
$\left(\bar\partial(\widetilde{f_1}-\widetilde{f_2})\right)_{t_M}$ vanishes.  $\overline{\partial}_{M}$ inherits from $\overline{\partial}$ the property that it forms a complex, that is, 
$\bar\partial_M\circ\bar\partial_M=0$. Similarly, it inherits the 'usual' rules of computing with differential forms.

\smallskip

It is useful to have an expression for $\overline{\partial}_{M}$ in a local coordinate frame.
Let $p\in M$ and $L_1,\dots, L_{m-1}$ be a local orthonormal basis for $T^{1,0}M$ in a neighborhood of $p$ and $\omega_1,\dots, \omega_{m-1}$ its dual frame. In this basis, a $(0,q)$-form $f$ on $M$ can be written as
$\sideset{}{'}\sum_{\vert J\vert=q}f_J\overline{\omega_J},$
where $\overline{\omega_J}=\overline{\omega_{j_1}}\wedge\dots\wedge\overline{\omega_{j_q}}$ and where the notation $\sideset{}{'}\sum$ means the summation over strictly increasing multi-indices. Note that in such a frame, the pointwise inner product between forms becomes
\begin{equation}\label{inner}
(f,g)(z) = \sideset{}{'}\sum_{|J|=q}f_{J}(z)\overline{g_{J}(z)}\,.
\end{equation}
For $\overline{\partial}_{M}$, we have
\begin{equation}\label{local}
\bar\partial_Mf=\sum_{j=1}^{m-1}\sideset{}{'}\sum_{\vert J\vert=q}(\overline{L_j}f_J)\overline{\omega_j}\wedge \overline{\omega_J}+\sideset{}{'}\sum_{\vert J\vert=q}f_J\bar\partial_M\overline{\omega_J}.
\end{equation}
It suffices to check this formula for $(0,0)$-forms (functions); it then follows by the product rule for $(0,q)$-forms. So take a function $g$ on $M$ and extend it to $\widetilde{g}$; then $\overline{\partial}_{M}g = (\overline{\partial}\widetilde{g})_{t_{M}}$. Locally, we can extend $L_{1}, \hdots, L_{m-1}$ to an orthonormal basis of $T\mathbb{C}^{n}$, with dual basis $\omega_{1}, \hdots, \omega_{n}$. In this basis, $\overline{\partial}\widetilde{g} = \sum_{j=1}^{n}\overline{L_{j}}(\widetilde{g})\overline{\omega_{j}}$. The tangential part of this form, on $M$, is $\sum_{j=1}^{m-1}\overline{L_{j}}g$, as desired.

\smallskip

The pointwise inner product on forms yields an $L^{2}$ inner product on $\Lambda^{0,q}T^{*}M$:
\begin{equation}\label{inner1}
(f,g)_{L^{2}_{(0,q)}(M)}:=\int_{M}(f,g)(z)d\sigma(z) \;,
\end{equation}
where $\sigma$ is the induced Lebesgue measure on $M$.
We denote by $L^2_{(0,q)}(M)$ the Hilbert space obtained by completing $\Lambda^{0,q}T^*M$ under this inner 
product.

\smallskip

We extend the tangential Cauchy--Riemann operator to an unbounded operator $\overline{\partial}_M: L^2_{(0,q)}(M)\rightarrow L^2_{(0,q+1)}(M)$ with the maximal domain of definition. That is, we set
$dom(\bar\partial_M)=\lbrace{f\in L^2_{(0,q)}(M) \mid\; \bar\partial_Mf\in L^2_{(0,q+1)}(M)}\rbrace$ where $\bar\partial_M$ is computed in the sense of distributions, say in local frames as in \eqref{local} (whether or not the result is in $L^{2}$ does not depend on the local frame chosen). 
$\left(L^2_{(0,q)}(M), \bar\partial_M\right)$ gives the $L^{2}$-$\overline{\partial}_{M}$-complex:
\begin{equation}\label{dbarMcomplex}
L^2(M)\overset{\bar\partial_M}{\rightarrow}L^2_{(0,1)}(M)\overset{\bar\partial_M}{\rightarrow}L^2_{(0,2)}(M)\overset{\bar\partial_M}{\rightarrow}\dots\overset{\bar\partial_M}{\rightarrow}L^2_{(0,m-1)}(M)\overset{\bar\partial_M}{\rightarrow} 0\;.
\end{equation}

\smallskip

At each form level, $\bar\partial_M$ is a closed, densely defined operator. As such, it has a Hilbert space adjoint, denoted by $\bar\partial_M^*$. In local coordinates, \eqref{local},\eqref{inner}, and integration by parts give
\begin{equation}\label{adjoint}
\bar\partial_M^*f=-\sum_{j=1}^{m-1}\sideset{}{'}\sum_{\vert K\vert=q-1}L_jf_{jK}\overline{\omega_K}+ \text{terms of order zero}\;,
\end{equation}
with $jK=(j, k_1,\dots,k_{q-1})$. The coefficients $f_{J}$ are defined for \emph{all} $q$-tuples $(j_{1}, \hdots, j_{q})$ in the usual alternating manner via $f_{J}\,\overline{\omega_{J}} = f_{\widetilde{J}}\;\overline{\omega_{\widetilde{J}}}$, where $\widetilde{J}$ is the increasing rearrangement of $J = (j_{1}, \hdots, j_{q})$. `Terms of order zero' refers to terms where the coefficients of $f$ are not differentiated. It is worthwhile to note that in contrast to the situation for $\overline{\partial}^{*}$ on a domain, there does not enter a boundary condition ($M$ has no boundary). On the other hand, the complex $\overline{\partial}\oplus\overline{\partial}^{*}$ is elliptic, while $\overline{\partial}_{M}\oplus\overline{\partial}_{M}^{*}$ is not (the direction complementary to the complex tangent space is not under control). 

\smallskip

In order to have a well behaved $L^{2}$-theory and an associated Hodge decomposition, one needs the range of $\overline{\partial}_{M}$ to be closed in $L^{2}$. This is indeed the case (\cite{Nicoara06, Baracco1, Baracco2, HarringtonRaich10}).
\begin{theorem}\label{closedrange}
Let $M$ be a compact smooth orientable pseudoconvex CR-submanifold of $\mathbb{C}^n$ of hypersurface type.
The tangential Cauchy--Riemann operator $\bar\partial_M:L^2_{(0,q)}(M)\rightarrow L^2_{(0,q+1)}(M)$ has closed range for $0\leq q\leq m-1$.
\end{theorem}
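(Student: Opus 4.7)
The plan is to exploit the complex manifold structure provided by Corollary~\ref{cor1}: $M$ is the smooth boundary of a complex manifold $\mathcal{M}$ of complex dimension $m$, and pseudoconvexity of $M$ makes $\mathcal{M}$ a smoothly bounded pseudoconvex manifold near $M$. On such manifolds, classical H\"ormander-Kohn $L^{2}$-theory yields closed range of $\bar\partial$ at every form level. The task is therefore to transfer this closed range property from $\bar\partial$ on $\mathcal{M}$ to $\bar\partial_M$ on the boundary $M$.

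The central step is to establish the basic \emph{a priori} estimate
\[
\|f\|^{2}_{L^{2}(M)} \;\lesssim\; \|\bar\partial_M f\|^{2} + \|\bar\partial_M^{\ast} f\|^{2}
\]
for $f \in \mathrm{dom}(\bar\partial_M) \cap \mathrm{dom}(\bar\partial_M^{\ast})$ at intermediate form levels $1 \le q \le m-2$, orthogonal to harmonic forms. To prove it I would extend $f$ to a form $\widetilde{f}$ on $\mathcal{M}$ lying in $\mathrm{dom}(\bar\partial^{\ast})$ (i.e.\ satisfying the $\bar\partial$-Neumann boundary condition), apply the Morrey-Kohn-H\"ormander identity on $\mathcal{M}$ against a plurisubharmonic weight function (whose existence is furnished by the hull statement in Theorem~\ref{theo1}), and use pseudoconvexity of $M$ to obtain a favorable sign on the Levi-form boundary term. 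A Rellich-type compactness argument then absorbs the lower-order interior contributions modulo the finite-dimensional space of harmonic forms; the standard functional-analytic equivalence between such \emph{a priori} estimates and closed range concludes the intermediate case. The degenerate levels $q = m-1$ (where $\bar\partial_M \equiv 0$, hence closed range is trivial) and $q = 0$ (where one needs $\|u\| \lesssim \|\bar\partial_M u\|$ modulo the CR functions, obtained by duality from the level $m-2$ estimate) are handled separately.

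The principal obstacle is that $\bar\partial_M \oplus \bar\partial_M^{\ast}$ is \emph{not} elliptic, as noted after~\eqref{adjoint}: the characteristic direction $T$ transverse to $T^{\mathbb{C}}M$ is not controlled by the complex, so subelliptic estimates on $\mathcal{M}$ do not simply restrict as tangential estimates on $M$. The standard remedy is a microlocal decomposition splitting a form by the sign of the symbol in the $T$-direction into three sectors: a positive cone $\Sigma^{+}$, a negative cone $\Sigma^{-}$, and an elliptic middle region. On $\Sigma^{+}$ pseudoconvexity of $M$ gives the estimate via the Levi form; on $\Sigma^{-}$ one uses the ``Levi form from the other side'' inherited from the pseudoconvex complex manifold $\mathcal{M}$; on the elliptic middle region $\bar\partial_M \oplus \bar\partial_M^{\ast}$ is elliptic and G\aa{}rding's inequality applies. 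Making this decomposition rigorous---controlling the commutators that arise at the boundaries between sectors and reassembling a global estimate---is the technical heart of the argument, and is the step I expect to be hardest; it is carried out with different emphases in~\cite{Nicoara06, Baracco1, Baracco2, HarringtonRaich10}.
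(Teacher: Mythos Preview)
The paper does not supply its own proof of Theorem~\ref{closedrange}; it surveys the literature, attributing the result to Shaw and Boas--Shaw (for boundaries of domains), Kohn (for boundaries of pseudoconvex manifolds, via microlocal methods), Nicoara (general case, microlocal), and Baracco (reduction, via Corollary~\ref{cor1}, to Kohn's theorem). Your proposal is in the right neighborhood: you invoke Corollary~\ref{cor1} exactly as Baracco does, and you correctly identify the microlocal decomposition into $\Sigma^{+}$, $\Sigma^{-}$, and an elliptic sector as the engine behind Kohn's and Nicoara's arguments.

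That said, the proposal conflates two distinct strategies. The extension-to-$\mathcal{M}$ idea in your second paragraph is Shaw's approach: one solves weighted $\overline{\partial}$-Neumann problems on $\mathcal{M}$ and passes to the boundary via trace and jump arguments, with no microlocal analysis needed. The microlocal decomposition in your third paragraph is Kohn's and Nicoara's approach: it is intrinsic to $M$ and does not use an extension $\widetilde{f}$ at all. Mixing the two muddles the logic. In particular, your description of $\Sigma^{-}$ as using ``the Levi form from the other side inherited from $\mathcal{M}$'' is not what happens: there is only one side, and the favorable sign on $\Sigma^{-}$ comes from integrating by parts to convert $\overline{L}$-derivatives to $L$-derivatives (equivalently, from a weight of opposite sign, or from the $T_{q}$ symmetry of Lemma~\ref{symmetric}), which flips the sign of the Levi-form boundary term so that pseudoconvexity again helps. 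Finally, your handling of the endpoint $q=0$ is off: closed range of $\overline{\partial}_{M}$ at level $0$ is equivalent to closed range of $\overline{\partial}_{M}^{\ast}$ at level $1$, and hence follows from the basic estimate at level $q=1$, not from level $m-2$.
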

Shaw (for $0\leq q\leq m-3$, \cite{Shaw85}) and
Boas and Shaw (for $q=m-2$, \cite{BoasShaw86}) proved the closed range property of $\bar\partial_M$ in $L^2$ when $M=\partial\Omega$ is the smooth boundary of a pseudoconvex domain $\Omega$ relatively compact in $\mathbb{C}^n$. Kohn (\cite{Kohn86}), using different (namely microlocal) methods, generalized their result for the boundary $M$ of a pseudoconvex manifold. In 2006, Nicoara (\cite{Nicoara06}) proved Theorem \ref{closedrange} for $m\geq 2$, also using microlocal methods. Baracco's proof (\cite{Baracco1, Baracco2}) is different; it is based on his result given in Corollary \ref{cor1}; this result reduces the general case to that considered in \cite{Kohn86}. When the CR-dimension of $M$ is at least two, Baracco also gives an alternative proof based on Theorem \ref{theo1} that does not rely on Corollary \ref{cor1} (\cite{Baracco1}). 
Harrington and Raich (\cite{HarringtonRaich10}) prove these closed range properties and related results under a condition they call weak $Y(q)$; this is a condition weaker than pseudoconvexity, adapted to the form level $q$ at which estimates are considered (their work also requires $m\geq 2$).

\smallskip

With Theorem \ref{closedrange} in hand, elementary Hilbert space theory gives the following Hodge decomposition:
\begin{equation}\label{Hodge}
L^{2}_{(0,q)}(M) = \ker (\overline{\partial}_{M}) \oplus \ker(\overline{\partial}_{M}^{*}) \oplus \mathcal{H}_{q}(M)\;,
\end{equation}
where $\mathcal{H}_{q}(M)$ denotes the space of so called harmonic forms, $\mathcal{H}_{q}(M) = \ker(\overline{\partial}_{M})\cap \ker(\overline{\partial}_{M}^{*})$. To this orthogonal decomposition, there corresponds the estimate
\begin{equation}\label{Hodge2}
\Vert u\Vert^2_{L^2_{(0,q)}(M)}\lesssim \Vert \bar\partial_Mu\Vert^2_{L^2_{(0,q+1)}(M)}+ \Vert \bar\partial^*_Mu\Vert^2_{L^2_{(0,q-1)}(M)}+\Vert H_qu\Vert^2_{L^2_{(0,q)}(M)}\;,
\end{equation}
where $H_q:L^2_{(0,q)}(M)\to \mathcal{H}_q(M)$ is the orthogonal projection. \eqref{Hodge2} has to be interpreted in the obvious way when $q=0$ or $q=(m-1)$.
We summarize the relevant Hilbert space geometry with the following figure:

\begin{center}
\hspace{-0.3in}
\begin{tikzpicture}

 \draw[->] (0,0,0) --  (0,0,3) node[left] {$\mathcal{H}_q(M)$};
 \draw[->] (0,0,0) -- (3,0,0);
 \draw[->] (0,0,0) -- (0,2.5,0); 


 \shade[right color=gray!30, left color=white] (0,0,0)--(0,2.25,0)--(0,2.5,2.5)--(0,0,2.5) ; 
  \node[left] at (0,0.6,1) {$ker(\bar\partial^*_M)$};
 \shade[top color=gray!80, bottom color= white] (0,0, 0)--(2.75,0,0)--(2.875,0,2.75)--(0,0,2.75);
  \node[right] at (1,0,1.5) {$ker(\bar\partial_M)$};
\node[left=0.2cm] at (0,2.5) {\footnotesize{$Range(\bar\partial^*_M)=ker(\bar\partial_M)^{\perp}$}};
\node[right=0.2cm] at (3,0) {\footnotesize{$Range(\bar\partial_M)=ker(\bar\partial^*_M)^{\perp}$}};

 \end{tikzpicture}
\end{center}


The complex Laplacian, or Kohn Laplacian, $\Box_{M,q}$ is defined by
\begin{equation}
\Box_{M,q}: L^2_{(0,q)}(M)\longrightarrow L^2_{(0,q)}(M), \;\Box_{_M,q} :=\bar\partial_M\bar\partial^*_M+\bar\partial^*_M\bar\partial_M.
\end{equation}
The domain of $\Box_{M,q}$ is understood to be the set of forms where this expression makes sense. Equivalently, $\Box_{M,q}$ is the unique self-adjoint operator associated to the Hermitian form $Q_{q}(f,f) = (\overline{\partial}_{M}f, \overline{\partial}_{M}f) + (\overline{\partial}_{M}^{*}f, \overline{\partial}_{M}^{*}f)$
 via
 \begin{equation}\label{QM1}
 Q_{q}(f,f) = (\Box_{M,q}f,f)_{L^{2}_{(0,q)}(M)} \;, \; f\in dom(\Box_{M,q})\;.
 \end{equation}
The discussion here is analogous to the one for $\Box_{q}$ associated to $\overline{\partial}$ on a domain in $\mathbb{C}^{n}$; details for that case may be found in \cite{Straube10a}, section 2.8.

It is immediate from \eqref{QM1} that $\ker(\Box_{M,q}) = \mathcal{H}_{q}$. Moreover, because the range of $\overline{\partial}_{M}$ is closed (and thus so is the range of $\overline{\partial}_{M}^{*}$), the range of $\Box_{M,q}$ is also closed. Consequently, $\Box_{M,q}$ maps $\mathcal{H}_{q}^{\perp}$ injectively onto itself. The (bounded) inverse is called \emph{the complex Green operator $G_{q}$}. We can easily make the boundedness of $G_{q}$ explicit; \eqref{Hodge2} and \eqref{QM1} give
\begin{multline}
 \;\;\;\;\; \|f\|_{L^{2}_{(0,q)}(M)}^{2} \lesssim \|\overline{\partial}_{M}f\|_{L^{2}_{(0,q+1)}(M)}^{2} + \|\overline{\partial}_{M}^{*}f\|_{L^{2}_{(0,q-1)}(M)}^{2} \\= \left(\Box_{M,q}f,f\right)_{L^2_{(0,q)}(M)}
 \leq \|\Box_{M,q}f\|_{L^2_{(0,q)}(M)}\|f\|_{L^2_{(0,q)}(M)}\;,\;\;\;\;\;\end{multline}
for $f\in dom(\Box_{M,q})\cap \mathcal{H}_{q}^{\perp}$. Consequently, $\|f\|_{L^2_{(0,q)}(M)}\lesssim \|\Box_{M,q}f\|_{L^2_{(0,q)}(M)}$, for $f\in dom(\Box_{M,q})\cap\mathcal{H}_{q}^{\perp}$.
It is convenient and customary to set $G_{q}$ equal to zero on $\mathcal{H}_{q}$, so that it becomes defined on all of $L^{2}_{0,q}(M)$.

The study of the Green operator is of interest for a number of reasons. We only mention one example; if one `knows' $G_{q}$, one can solve the inhomogeneous tangential Cauchy--Riemann equation $\bar\partial_M u=f$. Indeed, if $f$ satisfies the two obvious necessary conditions for solvability, that is $f\in\mathcal{H}_q^{\perp}$ and $f$ is $\overline{\partial}_{M}$-closed, then 
\begin{equation}\label{invert}
f=\bar\partial_M\bar\partial^*_MG_q f+\bar\partial^*_M\bar\partial_M G_q f\;.
\end{equation}
Since $\bar\partial_M f=0$,  $\bar\partial^*_M\bar\partial_M G_q f$ must also be $\bar\partial_M$-closed (by \eqref{invert}). But 
$\bar\partial^*_M\bar\partial_M G_q f$ is also orthogonal to the kernel of $\overline{\partial}_{M}$. Therefore, it must vanish,
and \eqref{invert} implies $f=\bar\partial_M(\bar\partial^*_MG_q f)$. The solution $u = \bar\partial^*_MG_q f\in ker(\bar\partial_M)^{\perp}$ is called the \emph{minimal solution} (it minimizes the $L^{2}$-norm) or the \emph{Kohn solution}.

\smallskip

Let $j_q: \mathcal{H}_q(M)^{\perp}\cap dom(\bar\partial_M)\cap dom(\bar\partial_M^*)\hookrightarrow \mathcal{H}_q(M)^{\perp}$ be the imbedding, and set 
\begin{equation}\label{graph}
\Vert u\Vert_{graph}^2:= Q_{q}(u,u) = \Vert \bar\partial_M u\Vert^2_{L^2_{(0,q+1)}(M)}+\Vert \bar\partial^*_M u\Vert^2_{L^2_{(0,q-1)}(M)}\;.
\end{equation}

 With this norm, $\mathcal{H}_q(M)^{\perp}\cap dom(\bar\partial_M)\cap dom(\bar\partial_M^*)$ becomes a Hilbert space (in view of \eqref{Hodge2}). \eqref{Hodge2} then also says that $ j_q$ is continuous, hence so is its adjoint $j_{q}^{*}$. The following simple expression for $G_{q}$ is useful when studying compactness of $G_{q}$. It is the analogue of the expression given for the $\overline{\partial}$-Neumann operator in part (1) of Theorem 2.9 in \cite{Straube10a}.
\begin{lemma}\label{Green}
\begin{equation}
(G_q)|_{\mathcal{H}_{q}^{\perp}} =j_q\circ (j_q)^*\;.\;\;\;\;\;\;\;\;\;\;\;\;\;\;\;\;\;\;\;\;\;\;\;\;\;\;\;\;\;\;\;\;\;\;\;\;\;\;\;\;\;
\end{equation}
\end{lemma}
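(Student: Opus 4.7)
The plan is to show that both $G_q f$ and $(j_q \circ j_q^*)f$ solve the same variational equation on the space $V := \mathcal{H}_q(M)^{\perp} \cap \operatorname{dom}(\bar\partial_M) \cap \operatorname{dom}(\bar\partial_M^*)$, and then invoke uniqueness. First I would equip $V$ with the inner product $\langle u, w\rangle_V := Q_q(u,w)$. The estimate \eqref{Hodge2} shows that $\|\cdot\|_{L^2_{(0,q)}(M)} \lesssim \|\cdot\|_{graph}$ on $V$; combined with closedness of $\bar\partial_M$ and $\bar\partial_M^*$, this makes $(V, \langle\cdot,\cdot\rangle_V)$ a Hilbert space and forces the embedding $j_q : V \hookrightarrow \mathcal{H}_q(M)^{\perp}$ to be continuous. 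Its Hilbert space adjoint $j_q^* : \mathcal{H}_q(M)^{\perp} \to V$ is then characterized, for $f \in \mathcal{H}_q(M)^{\perp}$, by
\begin{equation*}
Q_q(j_q^* f, v) \;=\; \langle j_q^* f, v\rangle_V \;=\; (f, j_q v)_{L^2_{(0,q)}(M)} \;=\; (f, v)_{L^2_{(0,q)}(M)} \quad \text{for every } v \in V.
\end{equation*}

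Next I would give the analogous variational characterization of $G_q f$. Because $\Box_{M,q}$ is the self-adjoint operator associated with the closed nonnegative Hermitian form $Q_q$ (cf.\ \eqref{QM1}), the representation theorem for such forms gives $Q_q(u, v) = (\Box_{M,q} u, v)_{L^2_{(0,q)}(M)}$ for every $u \in \operatorname{dom}(\Box_{M,q})$ and every $v \in V$. Taking $u = G_q f$ and using $\Box_{M,q}(G_q f) = f$ yields
\begin{equation*}
Q_q(G_q f, v) \;=\; (f, v)_{L^2_{(0,q)}(M)} \quad \text{for every } v \in V,
\end{equation*}
which is exactly the equation satisfied by $j_q^* f$.

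The form $Q_q$ is coercive on $V$ (again by \eqref{Hodge2}), so by Riesz representation the functional $v \mapsto (f, v)_{L^2_{(0,q)}(M)}$ has a unique representer in $(V, Q_q)$. Hence $j_q^* f = G_q f$ as elements of $V$, and applying $j_q$ gives $(j_q \circ j_q^*)f = G_q f$ in $L^2_{(0,q)}(M)$, as claimed. The only delicate point I anticipate is the identification of $\Box_{M,q}$ with the form operator attached to $Q_q$, including the fact that $\operatorname{dom}(\Box_{M,q})$ consists of exactly those $u \in V$ for which $v \mapsto Q_q(u, v)$ extends to a bounded functional on $L^2_{(0,q)}(M)$; this is the representation theorem for closed nonnegative forms, already tacitly invoked just below \eqref{QM1} in the excerpt. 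Granting it, the rest is pure Hilbert space geometry; no computation specific to $\bar\partial_M$ is required beyond the closed range estimate \eqref{Hodge2}.
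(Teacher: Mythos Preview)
Your argument is correct and is essentially the same as the paper's: both compute $(f,v)_{L^2}$ in two ways---once via the definition of the adjoint $j_q^*$, once via $\Box_{M,q}G_qf=f$ together with \eqref{QM1}---and then invoke nondegeneracy of the graph inner product on $V$ to conclude $G_qf=j_q^*f$. The paper phrases this as directly equating the two inner-product identities, while you frame it as uniqueness of the Riesz representer; the content is identical.
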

\begin{proof}
Let $u\in \mathcal{H}_{q}(M)^{\perp}$ and $v\in dom(\bar\partial_M)\cap dom(\bar\partial_M^*)\cap \mathcal{H}_q(M)^{\perp}$. Then
\begin{equation}
(u,v)_{L^2_{(0,q)}(M)} = (u, j_q v)_{L^2_{(0,q)}(M)} = ((j_q)^*u,v)_{graph}\; ,
\end{equation}
where the inner product on the right-hand side is the inner product on $\mathcal{H}_q(M)^{\perp}\cap dom(\bar\partial_M)\cap dom(\bar\partial_M^*)$ corresponding to the graph norm indicated above (and so equals $Q_{q}(j_{q}^{*}u,v)$). On the other hand, by virtue of \eqref{QM1},
\begin{equation}
(u,v)_{L^2_{(0,q)}(M)} = (\square_qG_qu,v)_{L^2_{(0,q)}(M)} = (G_q u,v)_{graph}\;.
\end{equation}
Equating the right-hand sides of the previous two equations shows that as a map into  $\mathcal{H}_q(M)^{\perp}\cap dom(\bar\partial_M)\cap dom(\bar\partial_M^*)$, $G_{q}$ equals $(j_{q})^{*}$. Note that $G_{q}$ maps into $\mathcal{H}_{q}(M)^{\perp}\cap dom(\Box_{M,q}) \subseteq \mathcal{H}_q(M)^{\perp}\cap dom(\bar\partial_M)\cap dom(\bar\partial_M^*)$. Therefore, as a map into $\mathcal{H}_{q}^{\perp}$, $G_{q}$ equals $j_{q}\circ (j_{q})^{*}$.
\end{proof}

It is natural to ask whether the dimension of $\mathcal{H}_{q}(M)$ is finite. This is indeed the case and was shown in \cite{Nicoara06} and in \cite{HarringtonRaich10} (in a more general context). That is $dim (\mathcal{H}_{q}(M)) < \infty$ when $1\leq q\leq (m-2)$. In general, this dimension will be non-zero. Examples can be obtained for instance based on an observation of Brinkschulte in \cite{Brink02}. Namely, there are even strictly pseudoconvex submanifolds of $\mathbb{C}^{n}$ of hypersurface type whose smooth $\overline{\partial}_{M}$-cohomology at the level of $(0,1)$-forms is nontrivial. For these examples, one can show that the $L^{2}$- $\overline{\partial}_{M}$-cohomology is also nontrivial; see the discussion in \cite{StraubeZeytuncu15} following Corollary 1.
In other words, in these examples, $dim(\mathcal{H}_{1}(M)) \neq 0$.

The finite dimensionality of $\mathcal{H}_{q}(M)$ is reflected in a version of the basic $L^{2}$ estimate \eqref{Hodge2} that no longer contains the harmonic component (\cite{StraubeZeytuncu15}, estimate (7)).

\begin{lemma}\label{basiclemma}
Let $M$ be as above. Then
\begin{multline}\label{basic}
\;\;\;\;\Vert u\Vert^2_{L^2_{(0,q)}(M)}\lesssim \Vert \bar\partial_Mu\Vert^2_{L^2_{(0,q+1)}(M)}+ \Vert \bar\partial^*_Mu\Vert^2_{L^2_{(0,q-1)}(M)}+\Vert u\Vert^2_{W^{-1}_{(0,q)}(M)}\;, \\
u \in dom(\overline{\partial}_{M})\cap dom(\overline{\partial}_{M}^{*})\;, 1\leq q\leq (m-2)\;.\;\;\; \; 
\end{multline}
\end{lemma}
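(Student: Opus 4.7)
The plan is to start from the estimate \eqref{Hodge2} already in hand and upgrade it by replacing the harmonic term $\|H_{q}u\|_{L^{2}_{(0,q)}(M)}^{2}$ by the weaker $\|u\|_{W^{-1}_{(0,q)}(M)}^{2}$. The substance of the lemma is therefore to produce an estimate of the form
\begin{equation*}
\|H_{q}u\|_{L^{2}_{(0,q)}(M)} \lesssim \|u\|_{W^{-1}_{(0,q)}(M)}, \qquad u\in L^{2}_{(0,q)}(M),\; 1\leq q\leq m-2.
\end{equation*}
The decisive input is that for $1\leq q\leq (m-2)$ the space $\mathcal{H}_{q}(M)$ is finite-dimensional (this is the result of Nicoara and of Harrington--Raich cited just before the lemma); once this is known, a duality argument gives the estimate and hence the lemma.

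First I would fix an $L^{2}$-orthonormal basis $e_{1},\ldots,e_{N}$ of $\mathcal{H}_{q}(M)$, so that
\begin{equation*}
H_{q}u=\sum_{i=1}^{N}(u,e_{i})_{L^{2}_{(0,q)}(M)}\,e_{i}.
\end{equation*}
For the pairing on the right to extend to $u\in W^{-1}_{(0,q)}(M)$, I need the $e_{i}$ to lie in $W^{1}_{(0,q)}(M)$ (smoothness is in fact available, but $W^{1}$ is all that is required). This is where the hypothesis $1\leq q\leq (m-2)$ enters decisively: on an oriented compact pseudoconvex CR-submanifold of hypersurface type with $m\geq 2$, Kohn's condition $Y(q)$ is satisfied at these form levels, hence $\overline{\partial}_{M}\oplus\overline{\partial}_{M}^{*}$ is subelliptic there, and any element of $\ker(\Box_{M,q})=\mathcal{H}_{q}(M)$ is in fact smooth. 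In particular each $e_{i}$ lies in $W^{1}_{(0,q)}(M)$.

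With this in hand, the $W^{-1}$--$W^{1}$ duality gives, for each $i$,
\begin{equation*}
|(u,e_{i})_{L^{2}_{(0,q)}(M)}|\le \|u\|_{W^{-1}_{(0,q)}(M)}\,\|e_{i}\|_{W^{1}_{(0,q)}(M)},
\end{equation*}
and summing squares yields $\|H_{q}u\|_{L^{2}_{(0,q)}(M)}^{2}\lesssim \|u\|_{W^{-1}_{(0,q)}(M)}^{2}$, with a constant depending only on $M$ and on the finitely many $\|e_{i}\|_{W^{1}}$. Plugging this into \eqref{Hodge2} produces \eqref{basic}.

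The only real obstacle is therefore securing enough regularity of harmonic forms to legitimize the $W^{-1}$--$W^{1}$ pairing; everything else is elementary finite-dimensional linear algebra together with the Hodge estimate \eqref{Hodge2}. Since this smoothness is a by-product of the subelliptic theory already underlying the closed-range statement of Theorem \ref{closedrange} in the range $1\leq q\leq m-2$, I would simply cite the relevant fact from \cite{Nicoara06} or \cite{HarringtonRaich10} rather than reprove it, and present the duality step as the content of the lemma.
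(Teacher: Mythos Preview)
Your overall strategy---replace $\|H_{q}u\|_{L^{2}}$ in \eqref{Hodge2} by $\|u\|_{W^{-1}}$---is the same as the paper's, but the step where you claim $e_{i}\in W^{1}_{(0,q)}(M)$ has a genuine gap. You assert that Kohn's condition $Y(q)$ holds on a compact pseudoconvex CR-submanifold of hypersurface type for $1\leq q\leq m-2$, and deduce subellipticity and hence smoothness of harmonic forms. This is false: $Y(q)$ is a pointwise condition on the signature of the Levi form (roughly, at least $\max(q+1,m-1-q)$ eigenvalues of the same sign, or $\min(q+1,m-1-q)$ pairs of opposite signs), and on a merely pseudoconvex $M$ the Levi form may vanish at some points, so $Y(q)$ fails there. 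Subellipticity is not available in general, and neither is $W^{1}$-regularity of harmonic forms. Indeed, the paper later records smoothness of $\mathcal{H}_{q}(M)$ only as Corollary~\ref{harmsmooth}, a \emph{consequence} of Theorem~\ref{Theo4} under the additional hypothesis that $\alpha$ is exact on the null space of the Levi form; it is not known at the point where Lemma~\ref{basiclemma} is proved.

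The paper's argument sidesteps this issue entirely. Finite dimensionality of $\mathcal{H}_{q}(M)$ already gives $\|v\|_{L^{2}}\lesssim\|v\|_{W^{-1}}$ for $v\in\mathcal{H}_{q}(M)$ (equivalence of norms on a finite-dimensional space), with no regularity needed. One then writes
\[
\|H_{q}u\|_{L^{2}}^{2}\lesssim\|H_{q}u\|_{W^{-1}}^{2}\le \|u\|_{W^{-1}}^{2}+\|H_{q}u-u\|_{W^{-1}}^{2}\lesssim \|u\|_{W^{-1}}^{2}+\|H_{q}u-u\|_{L^{2}}^{2},
\]
and applies \eqref{Hodge2} to $H_{q}u-u\in\mathcal{H}_{q}(M)^{\perp}$ to bound the last term by $\|\overline{\partial}_{M}u\|^{2}+\|\overline{\partial}_{M}^{*}u\|^{2}$. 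Substituting back into \eqref{Hodge2} yields \eqref{basic}. The triangle-inequality detour through $\|H_{q}u-u\|$ is precisely what lets you avoid the $W^{-1}$--$W^{1}$ pairing and the regularity claim you cannot justify here.
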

\begin{proof}
We reproduce the short argument from \cite{StraubeZeytuncu15}.
Because $\mathcal{H}_{q}(M)$ is finite dimensional for $1\leq q\leq (m-2)$, any two norms on it are equivalent. In particular
\begin{equation}\label{equivalent}
 \|u\|_{L^{2}_{(0,q)}(M)} \lesssim \|u\|_{W^{-1}_{(0,q)}(M)}\;,\;u \in \mathcal{H}_{q}(M)\;.
\end{equation}
For $u \in dom(\overline{\partial}_{M})\cap dom(\overline{\partial}_{M}^{*})$ we therefore have
\begin{multline}\label{mult1}
 \|H_{q}u\|_{L^{2}_{(0,q)}(M)}^2 \lesssim \|H_{q}u\|_{W^{-1}_{(0,q)}(M)}^2 \leq \|u\|_{W^{-1}_{(0,q)}(M)}^2 + \|H_{q}u - u\|_{W^{-1}_{(0,q)}(M)}^2\\ 
 \lesssim \|u\|_{W^{-1}_{(0,q)}(M)}^2 + \|H_{q}u - u\|_{L^{2}_{(0,q)}(M)}^2 \lesssim \|u\|_{W^{-1}_{(0,q)}(M)}^2 + \|\overline{\partial}_{M}u\|_{L^{2}_{(0,q+1)}(M)}^2 + \|\overline{\partial}_{M}^{*}u\|_{L^{2}_{(0,q-1)}(M)}^2\;.
 \end{multline}
In the last inequality, we have used \eqref{Hodge2} for $H_{q}u - u$ together with $H_{q}u \in ker(\overline{\partial}_{M}) \cap ker(\overline{\partial}_{M}^{*})$ and $H_{q}(H_{q}u - u) = 0$. Substituting \eqref{mult1} into \eqref{Hodge2} gives \eqref{basic}.
\end{proof}
Estimate \eqref{basic} is the form of the basic $L^{2}$ estimate that we will use in sections \ref{Compact} and \ref{Sobolev}. Note that in deriving \eqref{equivalent} from finite dimensionality of $\mathcal{H}_{q}(M)$, we do not have control of the constant. But of course, this finite dimensionality is established via estimates, so that one can establish control of the constant from there. In fact, \eqref{equivalent} is derived explicitly in \cite{HarringtonRaich10} in the proof of Lemma 5.1, which then, among other things, implies finite dimensionality of $\mathcal{H}_{q}(M)$ (because $L^{2}_{(0,q)}(M)$ imbeds compactly into $W^{-1}_{(0,q)}(M)$, the estimate implies that the identity on $\mathcal{H}_{q}(M)$, with the $L^{2}$-norm, is compact) . 

\section{Compactness estimates for the complex Green operator}\label{Compact}

In this section, we keep the previous notations, but we drop the requirement that $M$ is orientable: compactness estimates are a local property, and so one would not expect orientability to play a role. Pseudoconvexity in this case is defined locally. The vector field $T$ used in section \ref{CRsubmanifolds} to define the Levi form still exists locally, providing a definition of the Levi form locally. $M$ is \emph{pseudoconvex} if every point has a neighborhood where the Levi form does not change sign. Throughout the section, $M$ is a smooth compact pseudoconvex CR-submanifold in $\mathbb{C}^n$ of hypersurface type (without boundary), 

\smallskip

The reasons for studying compactness of the complex Green operator are somewhat analogous to those for studying compactness of the $\overline{\partial}$-Neumann operator; see \cite{Straube10a}, introduction to chapter 4 for a discussion. In particular, compactness estimates for $G_{q}$ (or $\overline{\partial}_{M}$) imply regularity in Sobolev spaces.

We first reformulate the property that $G_{q}$ is compact in terms of a family of estimates. These estimates are usually referred to as \emph{compactness estimates} or sometimes just as \emph{a} compactness estimate. They take the following form: for all $\varepsilon >0$, there exists a positive constant $C_{\varepsilon}$ such that
\begin{equation}\label{compest2} 
\Vert u\Vert_{L^2_{(0,q)}(M)}^2\leq \varepsilon\left( \Vert \bar\partial_Mu\Vert^2_{L^2_{(0,q+1)}(M)}+ \Vert \bar\partial^*_Mu\Vert^2_{L^2_{(0,q-1)}(M)}\right)+C_\varepsilon\Vert u\Vert^2_{W^{-1}_{(0,q)}(M)}\;.
\end{equation} 
Before proceeding, we note that the family of compactness estimates \eqref{compest2} implies the $L^{2}$-theory as discussed above; in particular, $\overline{\partial}_{M}$ and $\overline{\partial}_{M}^{*}$ have closed range (so that \eqref{Hodge2} holds), and $\mathcal{H}_{q}(M)$ is finite dimensional for $1\leq q\leq (m-2)$ (see for example \cite{Hormander65}, Theorems 1.1.3 and 1.1.2). Of course, we trivially have \eqref{basic}  by fixing some value of $\varepsilon$ in \eqref{compest2}.
\begin{lemma}\label{compest}
Let $1\leq q\leq (m-2)$. The following properties are equivalent:
\begin{enumerate}
\item $G_q$ is compact.
\item $j_{q}$ is compact.
\item $\overline{\partial}_{M}$ satisfies `the' compactness estimate \eqref{compest2}.
\end{enumerate}
\end{lemma}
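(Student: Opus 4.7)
The plan is to establish $(1)\Leftrightarrow (2)$ by combining Lemma \ref{Green} with the Hilbert-space fact that a bounded operator $T$ is compact if and only if $TT^{*}$ is compact, and then to establish $(2)\Leftrightarrow (3)$ by means of a standard Ehrling-type characterization of compact embeddings.

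For $(1)\Leftrightarrow (2)$: since $G_{q}$ vanishes identically on the finite-dimensional subspace $\mathcal{H}_{q}(M)$, the operator $G_{q}$ is compact on $L^{2}_{(0,q)}(M)$ precisely when its restriction to $\mathcal{H}_{q}(M)^{\perp}$ is. Lemma \ref{Green} identifies that restriction with $j_{q}\circ j_{q}^{*}$, so I apply the stated Hilbert-space fact to $T=j_{q}$ (this follows either from the spectral theorem for the non-negative self-adjoint operator $TT^{*}$, or from polar decomposition) to conclude the equivalence of (1) and (2).

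For $(2)\Leftrightarrow (3)$: the key tool is the following general principle. If $X\hookrightarrow Y\hookrightarrow Z$ is a chain of continuous Hilbert-space embeddings in which $Y\hookrightarrow Z$ is compact, then $X\hookrightarrow Y$ is compact if and only if for every $\varepsilon>0$ there exists $C_{\varepsilon}>0$ with $\|u\|_{Y}^{2}\leq \varepsilon\|u\|_{X}^{2}+C_{\varepsilon}\|u\|_{Z}^{2}$ for all $u\in X$. I would take $X=\mathcal{H}_{q}(M)^{\perp}\cap dom(\bar\partial_{M})\cap dom(\bar\partial_{M}^{*})$ endowed with the graph norm (a Hilbert norm by \eqref{Hodge2}), $Y=\mathcal{H}_{q}(M)^{\perp}\subset L^{2}_{(0,q)}(M)$, and $Z=W^{-1}_{(0,q)}(M)$. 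Rellich's theorem on the compact manifold $M$ supplies the compact embedding $L^{2}\hookrightarrow W^{-1}$, and with this setup the embedding $X\hookrightarrow Y$ is exactly $j_{q}$. The resulting interpolation inequality is precisely \eqref{compest2} restricted to $u\in\mathcal{H}_{q}(M)^{\perp}$.

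The main (and really only) obstacle is that \eqref{compest2} is stated for all $u\in dom(\bar\partial_{M})\cap dom(\bar\partial_{M}^{*})$ rather than only for $u$ in the orthogonal complement of $\mathcal{H}_{q}(M)$. To bridge this, I would decompose $u=H_{q}u+v$ with $v\in X$; the graph norms of $u$ and $v$ coincide because $H_{q}u\in ker(\bar\partial_{M})\cap ker(\bar\partial_{M}^{*})$, and the harmonic component is controlled by $\|H_{q}u\|_{L^{2}}^{2}\lesssim \|H_{q}u\|_{W^{-1}}^{2}\lesssim \|u\|_{W^{-1}}^{2}$, where the first estimate uses equivalence of norms on the finite-dimensional space $\mathcal{H}_{q}(M)$ (exactly as in the proof of Lemma \ref{basiclemma}) and the second uses continuity of $H_{q}$ in the $W^{-1}$ topology. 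Absorbing this contribution into $C_{\varepsilon}$ promotes the inequality on $X$ to the form of \eqref{compest2}; the converse direction is immediate by restriction.
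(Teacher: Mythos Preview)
Your strategy matches the paper's: $(1)\Leftrightarrow(2)$ via Lemma~\ref{Green} and the fact that $j_{q}$ is compact iff $j_{q}j_{q}^{*}$ is, and $(2)\Leftrightarrow(3)$ via the Ehrling-type principle (which is precisely the paper's Lemma~\ref{fual}). The paper in fact only explicitly derives \eqref{compest2} for $u\in\mathcal{H}_{q}(M)^{\perp}$ and leaves the extension to general $u$ implicit, so you are right to address it.

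However, your extension step contains a gap. The inequality $\|H_{q}u\|_{W^{-1}}^{2}\lesssim\|u\|_{W^{-1}}^{2}$, justified by ``continuity of $H_{q}$ in the $W^{-1}$ topology'', is not available: $H_{q}$ is the $L^{2}$-orthogonal projection onto $\mathcal{H}_{q}(M)$, and its boundedness on $W^{-1}$ is equivalent (by duality) to $\mathcal{H}_{q}(M)\subset W^{1}_{(0,q)}(M)$, which is not known a priori---Corollary~\ref{harmsmooth} deduces smoothness of harmonic forms only under the additional hypotheses of Theorem~\ref{Theo4}. The proof of Lemma~\ref{basiclemma} that you invoke does \emph{not} use this; it uses the triangle inequality $\|H_{q}u\|_{W^{-1}}\leq\|u\|_{W^{-1}}+\|u-H_{q}u\|_{W^{-1}}$ and then controls $\|u-H_{q}u\|_{W^{-1}}\leq\|u-H_{q}u\|_{L^{2}}$ by the graph norm via \eqref{Hodge2}. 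If you substitute that bound here, though, you pick up a term $C_{\varepsilon}\|u\|_{graph}^{2}$ with large constant, which cannot be absorbed. A clean fix: observe that the full embedding of $dom(\bar\partial_{M})\cap dom(\bar\partial_{M}^{*})$, equipped with the norm $(\|\cdot\|_{graph}^{2}+\|\cdot\|_{L^{2}}^{2})^{1/2}$, into $L^{2}_{(0,q)}(M)$ is compact (it is $j_{q}$ on $\mathcal{H}_{q}(M)^{\perp}$ and finite rank on $\mathcal{H}_{q}(M)$); then Lemma~\ref{fual}(1) gives $\|u\|_{L^{2}}^{2}\leq\varepsilon(\|u\|_{graph}^{2}+\|u\|_{L^{2}}^{2})+C_{\varepsilon}\|u\|_{W^{-1}}^{2}$, and the extra $\varepsilon\|u\|_{L^{2}}^{2}$ absorbs for $\varepsilon<1$.
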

\begin{proof} $G_{q}$ is compact if and only if $G_{q}|_{\mathcal{H}_{q}(M)^{\perp}}$ is compact. The equivalence of (1) and (2) is thus immediate from Lemma \ref{Green}. The equivalence of (2) and (3) is essentially a consequence of Lemma \ref{fual} below.

Assume that (3) holds, let $u \in dom(\overline {\partial}_{M})\cap dom(\overline{\partial}_{M}^{*})\cap\mathcal{H}_{q}(M)^{\perp}$, and fix $\varepsilon >0$. Then (3) says
\begin{equation}
 \|j_{q}u\|^{2}_{L^{2}_{(0,q)}(M)} = \|u\|_{L^{2}_{(0,q)}(M)}^{2} \leq \varepsilon\|u\|_{graph}^{2} + C_{\varepsilon}\|u\|_{W^{-1}_{(0,q)}(M)}^{2}\;.            
\end{equation}
Because  \eqref{Hodge2} holds when (3) holds, $dom(\overline{\partial}_{M})\cap dom(\overline{\partial}_{M}^{*})\cap\mathcal{H}_{q}(M)^{\perp}$ imbeds continuously into $L^{2}_{(0,q)}(M)$, hence compactly into $W^{-1}_{0,q}(M)$. Applying Lemma \ref{fual}, part (2), with $X = dom(\overline{\partial}_{M})\cap dom(\overline{\partial}_{M}^{*})\cap\mathcal{H}_{q}(M)^{\perp}$ (endowed with the graph norm), $Y = L^{2}_{(0,q)}(M)$, $Z_{\varepsilon} = W^{-1}_{(0,q)}(M)$ for all $\varepsilon$, and $T$ and $S_{\varepsilon}=S$ the obvious embeddings, gives (2). On the other hand, when (2) holds, part (1) of Lemma \ref{fual} with $X$, $Z$, $T$, and $S$ as above, gives (3).
\end{proof}
The following lemma from functional analysis appears in various versions in the literature. The complete version given here appears in \cite{Straube10a}, Lemma 4.3, where one also finds a proof. (Note that in the proof of Lemma \ref{compest} we did not use the full strength of (2); the space $Z_{\varepsilon}$ was the same for all $\varepsilon$). 
\begin{lemma}\label{fual}
Let $X,Y$ be Hilbert spaces and $T: X \to Y$ a linear operator.
\begin{enumerate}
\item Let $Z$ be another Hilbert space and let $S: X \to Z$ be a continuous linear operator that is injective.
If $T$ is compact, then for all $\varepsilon>0$, there exists $C_\varepsilon>0$ such that 
$$\Vert Tu\Vert^2_Y\leq \varepsilon \Vert u\Vert^2_X+C_\varepsilon\Vert Su\Vert^2_Z.$$
\item Assume that for all $\varepsilon>0$, there exist a Hilbert space $Z_\varepsilon$, a linear compact operator $S_\varepsilon: X \to Z_\varepsilon$, and a constant $C_\varepsilon>0$ such that 
$$\Vert Tu\Vert^2_Y\leq \varepsilon \Vert u\Vert^2_X+C_\varepsilon\Vert S_\varepsilon u\Vert^2_{Z_\varepsilon}.$$
Then $T$ is compact.
\end{enumerate}
\end{lemma}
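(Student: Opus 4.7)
Both halves are standard Hilbert space arguments; I would prove (1) by contradiction using weak compactness together with compactness of $T$ and injectivity of $S$, and (2) by a diagonal subsequence extraction exploiting compactness of each $S_{\varepsilon}$.

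For part (1), suppose the family of estimates fails. Then there is an $\varepsilon_{0} > 0$ and, for each positive integer $n$, a vector $u_{n} \in X$ with $\|Tu_{n}\|_{Y}^{2} > \varepsilon_{0}\|u_{n}\|_{X}^{2} + n\|Su_{n}\|_{Z}^{2}$. Normalizing to $\|u_{n}\|_{X} = 1$ gives $\|Tu_{n}\|_{Y}^{2} \geq \varepsilon_{0}$, and since $T$ is bounded $\|Su_{n}\|_{Z}^{2} \leq (\|T\|^{2} - \varepsilon_{0})/n \to 0$. Bounded sequences in a Hilbert space admit weakly convergent subsequences, so after passing to one we may assume $u_{n} \rightharpoonup u^{*}$ weakly in $X$. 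Continuity of $S$ then yields $Su_{n} \rightharpoonup Su^{*}$ weakly in $Z$, and combined with $Su_{n} \to 0$ in norm this forces $Su^{*} = 0$, whence by injectivity $u^{*} = 0$. Since $T$ is compact and $\{u_{n}\}$ is bounded, a further subsequence of $\{Tu_{n}\}$ converges strongly in $Y$; its strong limit must coincide with the weak limit $Tu^{*} = 0$, contradicting $\|Tu_{n}\|_{Y} \geq \sqrt{\varepsilon_{0}}$. The one place that genuinely requires care is the invocation of injectivity of $S$ to pin down $u^{*} = 0$; without it the argument breaks down.

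For part (2), let $\{u_{n}\}$ be a bounded sequence in $X$ with $\|u_{n}\|_{X} \leq M$. Specializing the hypothesis to $\varepsilon_{m} = 1/m$ for each positive integer $m$ produces compact operators $S_{m} : X \to Z_{m}$ with constants $C_{m}$. A standard diagonal procedure yields a subsequence $\{u_{n_{k}}\}$ along which $\{S_{m}u_{n_{k}}\}$ is Cauchy in $Z_{m}$ for every $m$. Applying the hypothesis to $u_{n_{k}} - u_{n_{l}}$ gives
\[
\|T(u_{n_{k}} - u_{n_{l}})\|_{Y}^{2} \leq \varepsilon_{m}(2M)^{2} + C_{m}\|S_{m}(u_{n_{k}} - u_{n_{l}})\|_{Z_{m}}^{2}.
\]
Given $\delta > 0$, I would first pick $m$ so that $\varepsilon_{m}(2M)^{2} < \delta/2$, which freezes $C_{m}$, and then use the diagonal Cauchy property to make the second term less than $\delta/2$ for $k, l$ sufficiently large. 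Thus $\{Tu_{n_{k}}\}$ is Cauchy in $Y$, hence convergent, so $T$ is compact. The main point to watch here is the order of quantifiers: $m$ must be chosen first, so that $C_{m}$ is fixed before one exploits smallness of $\|S_{m}(u_{n_{k}} - u_{n_{l}})\|_{Z_{m}}^{2}$.
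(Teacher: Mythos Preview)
Your argument is correct for both parts. The contradiction argument in (1) is clean; note only that the sentence ``since $T$ is bounded'' tacitly uses that compact operators are bounded, which is immediate from the definition but worth saying explicitly given that the lemma's hypothesis on $T$ is merely ``linear''. The diagonal extraction in (2) is the standard approach and your remark about the order of quantifiers is exactly the point.

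As for comparison with the paper: the paper does not actually prove this lemma. It states that the result ``appears in various versions in the literature'' and refers the reader to \cite{Straube10a}, Lemma~4.3, for a proof. So there is no in-paper argument to compare against; your proof simply supplies what the paper omits. The argument you give is essentially the one found in standard references (including the cited one), so nothing is lost or gained methodologically.
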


\smallskip

Next, we make a simple but important observation. Because $M$ is closed (i.e. has no boundary), compactness estimates for $G_{q}$ must hold at symmetric levels. For subelliptic estimates, this observation is in \cite{Kohn81}, Proposition on p.~255, and \cite{Koenig04}, p.~289. Koenig's construction works verbatim for compactness estimates. 
\begin{lemma}\label{symmetric}
Let $M$ be a smooth pseudoconvex compact CR-submanifold of $\mathbb{C}^n$ of hypersurface type (without boundary). Then, for $1\leq q\leq m-2$,
 $$G_q\;\text{ is compact}\; \Leftrightarrow\; G_{m-q-1}\;\text{ is compact}.$$
\end{lemma}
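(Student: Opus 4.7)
The approach I would take is to construct an antilinear bundle isomorphism $F : \Lambda^{0,q}T^*M \to \Lambda^{0,m-q-1}T^*M$ that intertwines $\bar\partial_M$ with $\bar\partial_M^*$ modulo zeroth-order operators, and then use it to move the compactness estimate in Lemma~\ref{compest}(3) between the two symmetric form levels. This is a variant of the device used by Kohn \cite{Kohn81} and Koenig \cite{Koenig04} for subelliptic estimates; the hint in the text is that Koenig's construction "works verbatim", so the main task is really to set it up cleanly.

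First, in any coordinate patch where $T^{1,0}M$ admits a local orthonormal frame $\omega_1, \dots, \omega_{m-1}$, I would set
\begin{equation*}
F\Bigl(\sideset{}{'}\sum_{|J|=q} u_J\, \overline{\omega_J}\Bigr) \;=\; \sideset{}{'}\sum_{|J|=q} \epsilon(J)\, \overline{u_J}\, \overline{\omega_{J^c}},
\end{equation*}
where $J^c$ is the increasing rearrangement of $\{1,\dots,m-1\}\setminus J$ and $\epsilon(J)=\pm 1$ is the sign of the permutation $(J,J^c)$. This is essentially the pointwise Hodge star on the anti-holomorphic part, composed with complex conjugation; it is an antilinear pointwise isometry, so it extends to a bounded antilinear isomorphism $F: L^2_{(0,q)}(M) \to L^2_{(0,m-q-1)}(M)$. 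The fact that $M$ may fail to be orientable causes no trouble: the pointwise norms of $Fu$, $\bar\partial_M Fu$, and $\bar\partial_M^* Fu$ are independent of the local frame, so the estimates we want are globally meaningful. If one wishes to work with a global $F$, one patches via a partition of unity subordinate to trivializing charts, and absorbs the cutoff commutators into the zeroth-order remainders below.

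Next, using the local expressions \eqref{local} for $\bar\partial_M$ and \eqref{adjoint} for $\bar\partial_M^*$, a combinatorial computation on the signs of wedge/contraction with $\overline{\omega_j}$ shows that
\begin{equation*}
F \circ \bar\partial_M \;=\; \pm\,\bar\partial_M^* \circ F + R_1, \qquad F \circ \bar\partial_M^* \;=\; \pm\,\bar\partial_M \circ F + R_2,
\end{equation*}
where $R_1, R_2$ are zeroth-order operators (multiplications by smooth functions, arising from derivatives of the frame, the rearrangement signs, and the lower-order terms already present in \eqref{local} and \eqref{adjoint}). No derivatives of the coefficients $u_J$ survive in $R_1, R_2$, so these error operators are bounded on $L^2$.

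Finally, assume $G_q$ is compact. By Lemma \ref{compest} the compactness estimate \eqref{compest2} holds at level $q$. Given $v \in \mathrm{dom}(\bar\partial_M) \cap \mathrm{dom}(\bar\partial_M^*)$ at level $m-q-1$, set $u = F^{-1} v$. Applying \eqref{compest2} to $u$ and translating back via $F$ (which preserves the $L^2$ norm and is bounded on $W^{-1}$, with the remainder terms $R_i u$ controlled in $L^2$ by $\|v\|_{L^2}$), I would obtain
\begin{equation*}
\|v\|_{L^2_{(0,m-q-1)}(M)}^{2} \;\leq\; \varepsilon\bigl(\|\bar\partial_M v\|^{2} + \|\bar\partial_M^* v\|^{2}\bigr) + C\varepsilon\,\|v\|_{L^2}^{2} + C_\varepsilon\,\|v\|_{W^{-1}}^{2}.
\end{equation*}
Choosing $\varepsilon$ small enough that $C\varepsilon < 1/2$ lets me absorb the $C\varepsilon\|v\|_{L^2}^2$ term on the left; since the statement of \eqref{compest2} only requires the estimate to hold for arbitrarily small $\varepsilon>0$, this yields the compactness estimate at level $m-q-1$, and so $G_{m-q-1}$ is compact by Lemma \ref{compest}. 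The reverse implication is symmetric (apply the same argument with $q$ and $m-q-1$ swapped).

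The main obstacle is the algebraic verification of the intertwining identity: one has to be careful with the signs coming from reordering $(J,J^c)$ and from swapping wedges for contractions, and to confirm that every differentiation that appears in $F\bar\partial_M u$ is matched by a corresponding differentiation in $\bar\partial_M^* F u$, so that what is left over is genuinely zeroth-order. Once this identity is in hand, the transfer of the compactness estimate is a routine absorption argument.
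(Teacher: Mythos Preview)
Your proposal is correct and follows essentially the same route as the paper: both localize via a partition of unity, construct (following Koenig) a local operator that intertwines $\overline{\partial}_{M}$ and $\overline{\partial}_{M}^{*}$ modulo zeroth-order terms, and then transfer the compactness estimate \eqref{compest2} between levels $q$ and $m-1-q$ by an absorption argument. The one noteworthy difference is that your $F$ is antilinear (carrying $\overline{u_J}$ rather than $u_J$), which is in fact what makes the intertwining work at the principal-symbol level---$\overline{\partial}_{M}$ differentiates coefficients by $\overline{L_j}$ while $\overline{\partial}_{M}^{*}$ differentiates by $L_j$, and conjugation is needed to match them; the paper's linear $T_q$ in \eqref{T-q} appears to omit this conjugation.
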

\begin{proof}
In order to verify that the compactness estimates \eqref{compest2} hold, it suffices, via  a partition of unity, to see that they hold for forms supported in a local coordinate chart. When derivatives hit the cutoff functions on the right-hand side of \eqref{compest2}, $\|u\|^{2}$ appears, but with $\varepsilon$ in front. These terms can therefore be absorbed into the left-hand side. So fix a local boundary chart. For forms supported in this chart, define a $(0,m-1-q)$-form
\begin{equation}\label{T-q}
T_q(\sideset{}{'}\sum_{\vert J\vert=q}\,u_J\overline{\omega_J}\,) \;\;\;\;=\sideset{}{'}\sum_{|J|=q, |K|=(m-1-q)
} \varepsilon_{(1,\dots, m-1)}^{JK} u_J\overline{\omega_K}\; ,
\end{equation}
where $\varepsilon_{(1,\dots, m-1)}^{JK}$ is the Kronecker symbol. Then, $T_{(m-1-q)}T_{q}u = (-1)^{q(m-1-q)}u$ (by computation), and the norms of $u$ and $T_{q}u$ are comparable. Moreover, (use \eqref{local}, \eqref{adjoint})
\begin{equation}\label{intertwine1}
\overline{\partial}_{M}T_{q}u = (-1)^{q}T_{(q-1)}(\overline{\partial}_{M}^{*}u) + \text{terms of order zero}\;,
\end{equation}
and
\begin{equation}\label{intertwine2}
\overline{\partial}_{M}^{*}T_{q}u = (-1)^{q+1}T_{(q+1)}(\overline{\partial}_{M}u) + \text{terms of order zero}\;.
\end{equation}
Details are in \cite{Koenig04}, p.~289. The intertwining properties \eqref{intertwine1} and \eqref{intertwine2} now make it clear that compactness estimates hold on $(0,q)$-forms supported in the fixed chart if and only if they hold on $(0,m-1-q)$-forms. By the remark at the outset, this completes the proof.
\end{proof}
The symmetry in Lemma \ref{symmetric} does not hold for compactness estimates for the $\overline{\partial}$-Neumann operator on a pseudoconvex domain. For example, on a convex domain, $N_{q}$ is compact if and only if the boundary contains no (germs of) $q$-dimensional complex varieties (\cite{FuStraube98, FuStraube99}). Therefore, if the boundary of the (convex) domain contains analytic discs, but no higher dimensional complex varieties, $N_{1}$ will fail to be compact, whereas $N_{(n-1)}$ will be compact. The argument from the proof of Lemma \ref{symmetric} breaks down: (the analogues of) the operators $T_{q}$ in general produce forms that fail to be in the domain of $\overline{\partial}^{*}$. On the other hand, compactness of the $\overline{\partial}$-Neumann operator percolates up the complex that is, if $N_{q}$ is compact, then so is $N_{q+1}$ (\cite{Straube10a}, Proposition 4.5), but compactness of the complex Green operator 
does 
not. If it did, compactness at one level would imply compactness at all levels, by Lemma \ref{symmetric}. However, it is known that on the boundary of a convex domain in $\mathbb{C}^{n}$, compactness of $G_{q}$ is equivalent to the absence of complex varieties of dimensions $q$ and $(n-1-q)$ from the boundary (\cite{RaichStraube08}, Theorem 1.5). So if the boundary of a convex domains contains an analytic disc, but no higher dimensional varieties, then $G_{2}, G_{3}, \hdots, G_{n-3}$ will be compact, but $G_{1}$ and $G_{n-2}$ will not be.

\smallskip

We now introduce a sufficient condition for compactness of $G_{q}$. In order to do so, we need a notion that was introduced into the literature in \cite{Catlin84b} (for $q=1$; see \cite{FuStraube99} for $q>1$). Let $K$ be a compact subset of $\mathbb{C}^{n}$. We say that \emph{$K$ satisfies property $(P_q)$} if for all $A>0$, there is a ${C}^2$ function $h_A$ defined on a neighborhood $U_A$ of $K$ such that $0\leq h_A\leq 1$ and 
\begin{equation}\label{propertyP}
\sideset{}{'}\sum_{\vert K\vert=q-1}\sum_{j,k=1}^n\dfrac{\partial^2h_A}{\partial z_j\partial\overline{z_k}}(z)w_{jK}\overline{w_{kK}}\geq A\vert w\vert^2\;,\;z\in U_{A}\;,\;w\in\Lambda^{0,q}_{z}\;.
\end{equation}
It will be useful, to have two reformulations of \eqref{propertyP}. First, 
\eqref{propertyP} is equivalent to saying that the sum of any  $q$-eigenvalues (equivalently, the smallest $q$) of the complex Hessian $\left(\partial^2h_A/\partial z_j\partial\overline{z_k}(z)\right)$ is at least equal to $A$. Note that this reformulation immediately gives $(P_q)\Rightarrow (P_{q+1})$ (because the $(q+1)$st eigenvalue necessarily has to be positive). Second, 
\begin{equation}\label{ortho}
\sum_{s=1}^{q}\sum_{j,k=1}^{n}\frac{\partial^{2}h_{A}(z)}{\partial z_{j}\partial\overline{z_{k}}}(t^{s})_{j}\overline{(t^{s})_{k}} \geq A\;, \text{whenever}\;t^{1}, \hdots, t^{q}\;\text{are orthonormal in}\;\mathbb{C}^{n}
\end{equation}
(see for example \cite{Straube10a}, Lemma 4.7, for these facts from multilinear algebra). A variant of property $(P_{q})$, $\widetilde{(P_{q})}$, was introduced in \cite{McNeal02} and shown to imply compactness of the $\overline{\partial}$-Neumann operator $N_{q}$. While it is easy to see that $(P_{q}) \Rightarrow \widetilde{(P_{q})}$, the exact relationship between the two is not understood. Similarly, whether $\widetilde{(P_{q})}$ can substitute for $(P_{q})$ also in the context of the Green operator does not seem to have been investigated.

The motivation in \cite{Catlin84b} was to have a sufficient condition on the boundary of a domain (i.e. $K=b\Omega$) for global regularity of the $\overline{\partial}$-Neumann operator (via compactness) that was verifiable on a reasonably large class of domains that included some domains that were not of finite type. Property $P_{1}$ was then investigated in \cite{Sibony87b}. In particular, Sibony realized that the notion fits beautifully into the context of Choquet theory for the cone of continuous functions on $K$ which can be approximated uniformly by functions that are plurisubharmonic in a neighborhood of $K$. For example, $K$ satisfying $(P_{1})$ is equivalent to this cone being all of $C(K)$; it is also equivalent to the Choquet boundary of this cone being all of $K$. Among other things, this analysis allowed Sibony to give examples of domains whose boundaries contain big (in the sense of having positive measure) sets of points of infinite type that still satisfy $(P_{1})$ (and thus have compact, 
hence globally regular, $\overline{\partial}$-Neumann operators). Note that if the above mentioned cone has to 
be all of $C(K)$, then the maximum principle 
for subharmonic functions immediately implies that $K$ cannot contain analytic discs. Thus property $(P_{1})$ should be thought of as excluding analytic structure from $K$ in a potential theoretic way. For the most part, this analysis carries over to $(P_{q})$, as observed in \cite{FuStraube99}. In particular, $(P_{q})$ should be thought of as excluding `$q$-dimensional analytic structure' from $K$; that is, $K$ must not contain $q$-dimensional complex varieties (or $q$-dimensional analytic polydiscs) (\cite{Straube10a}, Lemma 4.20). A thorough discussion of property $(P_{q})$ can be found in \cite{FuStraube99} and in sections 4.4 -- 4.10 of \cite{Straube10a}, in addition to the original reference \cite{Sibony87b} (for $q=1$).

While $(P_{q})$ is by now classical as a sufficient condition for compactness of the $\overline{\partial}$-Neumann operator, it has made its appearance in the context of the complex Green operator $G_{q}$ only relatively recently. Namely, first in \cite{RaichStraube08} when $M$ is the boundary of a smooth bounded pseudoconvex domain, then in \cite{Raich10} when $M$ is a smooth compact (closed) orientable pseudoconvex CR-submanifold of $\mathbb{C}^{n}$ of hypersurface type\footnote{Raich uses a condition called (CR-$P_{q})$, and he shows that $(P_{q}) \Rightarrow$ (CR-$P_{q})$. It turns out that the two conditions are actually equivalent for imbedded CR-manifolds (\cite{Straube10}). However, (CR-$P_{q})$ makes sense on CR-manifolds of hypersurface type that are not imbedded, and implies compactness of $G_{q}$ in this case also; this is shown in \cite{KhanhPintonZampieri12}.}, and finally in \cite{Straube10}, with a different proof that does not require $M$ to be orientable (see also \cite{
KhanhZampieri11} for a more general treatment of some of these results). 
These results have been generalized to CR-manifolds that are not assumed imbedded in \cite{KhanhPintonZampieri12}. Since our focus in this survey is on imbedded CR-manifolds, we only state the result for this case, for which we will sketch a proof that uses the simple geometric ideas from section \ref{CRsubmanifolds} to reduce to the case of an actual hypersurface. A nontrivial complication arises that reveals an intriguing issue with property $(P_{q})$ when $q>1$. Note that the symmetry in the assumptions (and the conclusion) is dictated by Lemma \ref{symmetric}.
\begin{theorem}\label{compact-q}
Let $M$ be a smooth compact (closed) pseudoconvex CR-submanifold of $\mathbb{C}^{n}$  of hypersurface type of CR-dimension $(m-1)$. Assume that $M$ satisfies $(P_q)$ and $(P_{m-1-q})$, (equivalently, $(P_{k})$, where $k=min\{q, m-1-q\}$), 
$1\leq q\leq m-2$. Then $G_q$ and $G_{m-1-q}$ are compact.
\end{theorem}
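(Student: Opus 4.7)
The plan is to reduce the desired compactness of $G_{q}$ on $M$ to the known compactness result for an actual hypersurface, exploiting the local graphing of $M$ over a hypersurface $\pi(M)\subset \mathbb{C}^{m}$ from the end of Section~\ref{CRsubmanifolds}. By Lemma~\ref{symmetric} it suffices to prove compactness of $G_{k}$ for $k=\min\{q,m-1-q\}$, and by Lemma~\ref{compest} this is equivalent to the family of compactness estimates \eqref{compest2} for $(0,k)$-forms. A partition of unity subordinate to a finite atlas of graphing charts reduces the problem to proving the estimate for forms $u$ supported in a single chart, with the commutator errors from the cutoffs being lower order and absorbable into the $\varepsilon$-term on the right-hand side, as in the proof of Lemma~\ref{symmetric}.

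Inside such a chart, the graph map $(\pi|_{M})^{-1}\colon \pi(M)\to M$ is a CR-diffeomorphism (Lemma~\ref{CR3}), so by Lemma~\ref{CR2} it identifies $T^{1,0}\pi(M)$ with $T^{1,0}M$ and conjugates $\overline{\partial}_{M}$ with $\overline{\partial}_{\pi(M)}$ modulo zero-order operators. Pulling forms back via this diffeomorphism therefore yields a quasi-isometry of the relevant local $L^{2}_{(0,k)}$-spaces which transforms the compactness estimate on $M$ into the compactness estimate on the hypersurface $\pi(M)\subset\mathbb{C}^{m}$. The latter estimate is already known from \cite{RaichStraube08, Straube10}, provided $\pi(M)$ satisfies $(P_{k})$ as a compact subset of $\mathbb{C}^{m}$.

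The remaining --- and hardest --- step is to produce the bumping functions required for $(P_{k})$ on the compact local piece of $\pi(M)\subset\mathbb{C}^{m}$, given the bumping functions $h_{A}$ on neighborhoods of $M\subset\mathbb{C}^{n}$. The natural candidate $g_{A}:=h_{A}\bigl|_{\mathbb{C}^{m}\times\{0\}^{n-m}}$ is plurisubharmonic (as the restriction of a plurisubharmonic function to a complex affine subspace), and by the Cauchy interlacing inequality its complex Hessian inherits the sum-of-$k$-smallest-eigenvalues bound of \eqref{ortho} from that of $h_{A}$. The subtle point is to ensure that the restricted Hessian actually controls, uniformly in $A$, the tangential Hessian entering the Kohn--Morrey--H\"ormander identity on the hypersurface $\pi(M)$; the graphing-function component of the coordinates mixes ambient holomorphic directions with CR-directions on $\pi(M)$, and the resulting cross-terms must be shown to be benign. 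I expect this to be the main obstacle, and it is presumably the ``intriguing issue'' with $(P_{q})$ for $q>1$ that the authors flag: for $k=1$ one can appeal to Sibony's potential-theoretic reformulation of $(P_{1})$ to circumvent the problem, while for $k\ge 2$ no analogous reformulation is available and the transfer has to be carried out by a direct construction exploiting the CR-character of the graphing functions $h_{1},\ldots,h_{n-m}$ on $\pi(M)$.
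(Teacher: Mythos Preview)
Your overall architecture matches the paper's: localize via a partition of unity, transfer the compactness estimate to the hypersurface $\pi(M)\subset\mathbb{C}^{m}$ through the CR-equivalence $(\pi|_{M})^{-1}$ (modulo zero-order errors), complete $\pi(M)$ to the boundary of a pseudoconvex domain, and invoke \cite{RaichStraube08}. You also correctly anticipate that the case $k\geq 2$ is the delicate one.

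However, your proposed transfer of property $(P_{k})$ to $\pi(M)$ has a genuine gap. The candidate $g_{A}:=h_{A}\bigl|_{\mathbb{C}^{m}\times\{0\}^{n-m}}$ is not defined. The function $h_{A}$ lives on a neighborhood $U_{A}$ of $M$ in $\mathbb{C}^{n}$, and these neighborhoods shrink as $A\to\infty$. But $M$ lies on the graph $\{(z',h_{1}(z'),\ldots,h_{n-m}(z'))\}$, not on the slice $\mathbb{C}^{m}\times\{0\}$; the two meet only at the single point $p$. So for large $A$ there is no reason for $U_{A}$ to contain any piece of $\mathbb{C}^{m}\times\{0\}$ near $\pi(M)$, and your restriction is vacuous. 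The Cauchy interlacing observation, while correct for affine coordinate slices, is therefore applied to a function that does not exist where you need it.

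What the paper actually does is different in exactly this step. Since $(P_{k})$ rules out $(m-1)$-dimensional complex submanifolds in $M$, Tr\'{e}preau's theorem gives a holomorphic extension of the graphing functions $h_{1},\ldots,h_{n-m}$ to the pseudoconvex side of $\pi(M)$. Composing with this extension yields a holomorphic map from one side of $\pi(M)$ into a neighborhood of $M$ in $\mathbb{C}^{n}$, and one pulls $h_{A}$ back along \emph{that} map. For $k=1$ this pullback is plurisubharmonic (holomorphic pullback preserves plurisubharmonicity), and after extending across $\pi(M)$ and using the countable-union stability of $(P_{1})$ \cite{Sibony87b}, one gets $(P_{1})$ on the full boundary $b\Omega$. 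It is this invariance of $(P_{1})$ under biholomorphisms, not a Choquet-theoretic reformulation, that makes the case $k=1$ work. For $k\geq 2$ the pullback is not by an isometry, so the eigenvalue-sum condition \eqref{ortho} need not survive; interlacing does not help because the Jacobian of the holomorphic map is not unitary. The paper's resolution is \emph{not} to prove that $(P_{k})$ transfers, but rather to change the metric on $\Omega$ near $\pi(M)$ so that the pulled-back Hessians satisfy \eqref{ortho} with respect to the new metric, and then to rederive the weighted estimates of \cite{RaichStraube08} in that metric. Your proposal does not contain this idea.
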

Note that verifying $(P_q)$ and $(P_{m-1-q})$ is equivalent to verify $(P_{k})$, where $k=min\{q, m-1-q\}$. One also notices that the endpoint cases $q=0$ and $q=(m-1)$ are excluded here; in particular, $m\geq 3$ (i.e. the CR-dimension of $M$ is at least two; see the discussion following the statement of Theorem 1 in \cite{Straube10}). The proof in \cite{RaichStraube08} follows the general strategy in \cite{Shaw85} (see also \cite{ChenShaw01}), whereas \cite{Raich10} further develops the microlocal analysis in \cite{Nicoara06} (\cite{KhanhPintonZampieri12} is also based on microlocal methods). We now sketch the proof in \cite{Straube10}.
\vspace{-0.06in}
\begin{proof}[Sketch of proof of Theorem \ref{compact-q}]
As noted in the proof of Lemma \ref{symmetric}, it suffices to show that compactness estimates hold for forms with `local' support. We thus fix a point $p\in M$ and consider forms with support in a neighborhood of $p$ that is small enough so that the setup in section \ref{CRsubmanifolds} holds. That is, near $p$, $M$ is graphed over a hypersurface $\pi(M)$ in $\mathbb{C}^{m}$, via \eqref{map}.

Now the idea is to obtain compactness estimates on $M$ (near $p$) from such estimates on $\pi(M)$, via the CR-equivalence provided by $\pi|_{M}$. There is a technical point here that one has to be mindful of: the tangential Cauchy--Riemann operators (defined extrinsically, as we did here) do not commute with the pullbacks under a CR-equivalence (see \cite{Boggess91}, section 9.2). Nevertheless, it is possible to set up a correspondence between $(0,q)$-forms on $M$ and on $\pi(M)$ so that $\overline{\partial}_{M}$ corresponds to $\overline{\partial}_{\pi(M)}$ \emph{modulo terms of order zero}; details are in \cite{Straube10}, p. 4113--4114. Just as in the proof of Lemma \ref{symmetric}, these terms are benign for compactness estimates (compare also Remark 6 in \cite{Straube10}). Therefore, to prove Theorem \ref{compact-q}, it suffices to establish compactness estimates on $\pi(M)$ for forms supported near $\pi(p)$. 

We would like to use \cite{RaichStraube08}, so we need $\pi(M)$ to be part of the boundary of a smooth bounded pseudoconvex domain. Denote by $\Omega$ such a domain, which is moreover strictly pseudoconvex at the points of $b\Omega\setminus \pi(M)$ (see \cite{Bell86} and the discussion in \cite{Straube10}, p.4113). Ideally, $\Omega$ would satisfy $(P_{q})$ and $(P_{m-1-q})$, and we would be done. This strategy works for $q=1$; when $q>1$, things turn out to be more complicated.

Because $M$ satisfies $(P_{q})$ and $(P_{m-1-q})$, it does not contain (germs of) complex manifolds of dimension $(m-1)$, so $\pi(M)$ does not either. As noted in section \ref{CRsubmanifolds}, this implies (Tr\'{e}preau's theorem, \cite{Trepreau06}) that the graphing functions $h_{1}, \hdots, h_{n-m}$ extend as holomorphic functions to one side of $\pi(M)$, near $\pi(p)$, 
as in \eqref{extend}\footnote{We deviate slightly from \cite{Straube10} here. At this step there, the mapping functions are extended merely as $C^{\infty}$ functions, but whose $\overline{\partial}$ vanishes to infinite order on $\pi(M)$; this is sufficient to obtain the desired functions $h_{A}^{\prime}$ 
below.}. Note that it is clear which side is the pseudoconvex side: because $\pi(M)$ is pseudoconvex and does not contain (germs of) complex $(m-1)$-dimensional manifolds near $\pi(p)$, it is not Levi flat in any neighborhood of $\pi(p)$, so that there are strictly positive eigenvalues of the Levi form arbitrarily close to $\pi(p)$.
Let now $q=1$, and let $K$ be a compact subset of $\pi(M)\cap b\Omega$. $(\pi|_{M})^{-1}(K)$ is a compact subset of $M$, and pulling back the (plurisubharmonic, since $q=1$) functions in the definition of property $(P_{1})$ for $M$ gives the following property of $K$. For all $A>0$, there 
exists a neighborhood $U_{A}$ of $K$, a plurisubharmonic function $h_{A}$ on $U_{A}\cap \Omega$, with $0\leq h_{A}\leq 1$ and with the smallest eigenvalue of its complex Hessian at least $A$. Because $h_{A}$ is also $C^{2}$ up to $b\Omega$, we can extend it across $b\Omega$ to obtain a neighborhood $U_{A}^{\prime}$ and a plurisubharmonic function $h_{A}^{\prime}$ on $U_{A}^{\prime}$ with $-1\leq h_{A}^{\prime}\leq 2$ and with the smallest eigenvalue of its complex Hessian at least equal to $A/2$. Replacing $h_{A}^{\prime}$ by $(1/3)(h_{A}^{\prime}+1)$ 
and rescaling the constant $A$ in the definition of property $(P_{1})$ for $K$ shows that $K$ does satisfy $(P_{1})$. As a result, $b\Omega$ is the countable union of compact sets, all of which satisfy $(P_{1})$ (since compact subsets of the strictly pseudoconvex part of the boundary automatically satisfy $(P_{1})$, see the proof of Corollary 4.16 in \cite{Straube10a}; this uses a local version of Theorem 2 in \cite{Catlin84b}). Such a union must itself satisfy $(P_{1})$ (\cite{Sibony87b}, Proposition 1.9, \cite{Straube10}, Corollary 4.14), and hence $(P_{q})$ for all $q$ (in 
particular for $q=(m-2)$). Therefore, by \cite{RaichStraube08}, Theorem 1.4, the complex Green operator on $(0,1)$-forms on $b\Omega$ is compact. By what we said above, this completes the proof of Theorem \ref{compact-q} for the case $q=1$.
 
When $q>1$, the above approach must be modified. We can no longer assert that the pullbacks $h_{A}$ from the previous paragraph have the property that the sum of the smallest $q$ eigenvalues of their Hessians is at least $A$. \eqref{ortho} above indicates that the lack of conformality of biholomorphic maps ($n$ is at least two) is the culprit. It is indeed not clear what happens to property $(P_{q})$ under biholomorphic maps when $q>1$. This is the `nontrivial complication' alluded to above. From the perspective of a sufficient condition for compactness, this situation is unsatisfactory; it is not hard to see, at least for the $\overline{\partial}$-Neumann operators, that compactness is invariant under biholomorphic maps. One would like the sufficient condition to exhibit the same invariance. A more complete discussion of this issue, and a possible remedy, is in Remarks 3 and 4 in \cite{Straube10}. The remedy amounts to changing the metric. This is also the approach in \cite{Straube10}. The Euclidean metric 
on vectors (and forms) on  $\Omega$ is modified near $\pi(M)$, so that the Hessians of the pullbacks $h_{A}$ satisfy \eqref{ortho} with respect to this modified metric (that is, for $t^{1}, \hdots, t^{q}$ orthonormal with respect to that metric). One then has to derive analogues, for this metric,  of the  weighted estimates (19) in \cite{RaichStraube08}. Once these are in hand, one can follow \cite{RaichStraube08} fairly closely to obtain compactness of $G_{q}$ and $G_{m-1-q}$ on $b\Omega$. The details require some work, for which we refer the reader to \cite{Straube10}.
\end{proof}
Property $(P_{q})$ is potential theoretic in nature. There are also known sufficient conditions for compactness that are geometric in nature, both for the $\overline{\partial}$-Neumann operator (\cite{Straube04, MunasingheStraube07}) and for the complex Green operator (\cite{MunasingheStraube12}). 

The idea underlying these conditions is most readily apparent in the case of comparable eigenvalues of the Levi form (in fact, the original paper \cite{Straube04} dealt with domains in $\mathbb{C}^{2}$). Namely, for such CR-submanifolds (of hypersurface type), the $L^{2}$-norms of complex tangential derivatives of a form $u$ are controlled by the $L^{2}$-norms of $\overline{\partial}_{M}u$, $\overline{\partial}_{M}^{*}u$, and $u$ (these estimates are often referred to as `maximal estimates'). Near a strictly pseudoconvex point, `everything is under control', by virtue of pseudolocal subelliptic estimates (cf. \cite{ChenShaw01}, Theorems 8.2.5 and 8.3.5, \cite{Kohn85}, Theorem 2.5). Therefore, if for arbitrarily short times, there is a complex tangential field $Z_{p,\varepsilon}$ along which a patch near a weakly pseudoconvex point $p$ can flow to a strictly pseudoconvex patch, the $L^{2}$-norm of $u$ on the weakly pseudoconvex patch is controlled by its norm on the strictly pseudoconvex patch (which is 
under 
control), plus an arbitrarily small 
time (i.e. $\varepsilon$) times $(\|\overline{\partial}_{M}u\|+\|\overline{\partial}_{M}^{*}u\|+\|u\|)$ (because the latter term dominates $\|Z_{p,\varepsilon}u\|$).  Modulo technicalities such as overlapping patches, this readily leads to the compactness estimates \eqref{compest2}.

When the comparable eigenvalues assumption is dropped, the scheme from the previous paragraph only allows to estimate the $L^{2}$-norm of the derivative of a certain microlocal portion of $u$ (usually referred to as the $+$ microlocalization). However, this turns out to be sufficient, because this estimate is obtained for all form levels, and the $T_{q}$ operators in \eqref{T-q} interchange the positive and negative microlocalizations of $u$ (the part usually referred to as the $0$-part is under control by ellipticity). There are additional conditions involved. One needs to impose some control on the fields $Z_{p,\varepsilon}$ vis-\`{a}-vis the smallest eigenvalue of the Levi form at each point. Moreover, in order to control the overlap of patches after they flow, one needs a uniform bound on the divergence of the fields $Z_{p,\varepsilon}$. This makes the most general formulation somewhat technical, and we refer the reader to \cite{MunasingheStraube12}, Theorem 1. Instead, we present two instances 
where all these conditions fit nicely into a simple geometric condition, namely a cone condition.

Denote by $K$ the set of weakly pseudoconvex points of $M$; $K$ is a compact subset of $M$. We say that \emph{$M\setminus K$ satisfies a complex tangential cone condition} if there is a (possibly small) cone $\mathcal{C}$ in $\mathbb{R}^{2n}\approx \mathbb{C}^{n}$ so that the following holds. For each $p\in K$, there exists a complex tangential direction so that when $\mathcal{C}$ is moved by a rigid motion to have vertex at $p$ and axis in the given complex tangential direction, then $\mathcal{C}\cap M$ is contained in $M\setminus K$. The phrase `$M\setminus K$ satisfies a cone condition with axis in the null space of the Levi form' shall have the obvious meaning.
\begin{proposition}\label{geometric}
Let $M$ be a smooth compact pseudoconvex CR-submanifold of $\mathbb{C}^{n}$ of hypersurface type (without boundary), of CR-dimension at least two (i.e. $m\geq 3$). Assume 

(i) The Levi form of $M$ has comparable eigenvalues at each point, and $M\setminus K$ satisfies a complex tangential cone condition.

or

(ii) The Levi form has only one degenerate eigenvalue at each point of $K$, and $M\setminus K$ satisfies a cone condition with axis in the null space of the Levi form (at each point).
\smallskip

\noindent Then,  $G_{q}$ is compact, $1\leq q\leq (m-2)$.
\end{proposition}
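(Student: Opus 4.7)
The plan is to deduce both (i) and (ii) from the general sufficient condition for compactness in \cite{MunasingheStraube12}, Theorem 1, which the paragraphs preceding the proposition describe informally. That theorem requires, for each $p \in K$ and each $\varepsilon > 0$, a smooth complex tangential vector field $Z_{p,\varepsilon}$ defined near $p$ whose flow carries a patch containing $p$ into $M \setminus K$ in time at most $\varepsilon$, together with: a uniform divergence bound on suitably normalized $Z_{p,\varepsilon}$; appropriate control of $Z_{p,\varepsilon}$ against the smallest eigenvalue of the Levi form at each nearby point; and enough smoothness to piece the local estimates together via a partition of unity.

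In case (i), I would take $Z_{p,\varepsilon}$ to be a constant complex tangential vector field in the direction of the axis of the cone $\mathcal{C}$ (translated to $p$), scaled by a factor of order $1/\varepsilon$ and smoothly cut off. The cone condition guarantees that the integral curve through $p$ exits $K$ after time $O(\varepsilon)$, and a constant field has zero divergence, so the uniform divergence bound is automatic. The comparable eigenvalues hypothesis yields \emph{maximal estimates}: the full complex tangential derivative of $u$ is controlled by $\|\bar\partial_M u\| + \|\bar\partial_M^* u\| + \|u\|$. This is exactly what is needed to bound $\|Z_{p,\varepsilon}u\|_{L^2}$, so that the $L^2$-norm of $u$ on the weakly pseudoconvex patch is estimated by its norm on a strictly pseudoconvex patch (under control by pseudolocal subelliptic estimates, cf.\ \cite{ChenShaw01}) plus $\varepsilon(\|\bar\partial_M u\| + \|\bar\partial_M^* u\| + \|u\|)$, giving the compactness estimate \eqref{compest2}.

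In case (ii), maximal estimates are unavailable, so one must work microlocally as in the general theorem. Here I would choose $Z_{p,\varepsilon}$ along the unique null direction of the Levi form at $p$ (a constant field in a local coordinate chart, suitably cut off). The single-degenerate-eigenvalue hypothesis makes this null direction vary smoothly along $K$, so the construction is consistent from point to point, and the constant underlying field again delivers the divergence bound for free. The cone condition, now with axis in the null space, ensures that flowing along $Z_{p,\varepsilon}$ still exits $K$ in short time, and the required matching of $Z_{p,\varepsilon}$ with the smallest Levi eigenvalue is immediate because $Z_{p,\varepsilon}(p)$ lies exactly in the null space (and nearly so nearby). The argument from \cite{MunasingheStraube12} then controls the $+$ microlocalization of $u$ at every form level, and the operators $T_q$ from \eqref{T-q} interchange $+$ and $-$ microlocalizations, yielding compactness simultaneously on $(0,q)$- and $(0,m-1-q)$-forms.

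The main obstacle in both cases is the bookkeeping needed to verify the hypotheses of \cite{MunasingheStraube12}, Theorem 1 uniformly over $K$: covering $K$ by finitely many neighborhoods, controlling the overlap of the flowed patches, and ensuring that errors from the cutoffs and from the point-by-point changes of coordinates aligning $\mathcal{C}$ (or the null direction) at different base points remain small enough to be absorbed. Once that is settled, invoking the general theorem gives compactness of $G_q$ on the indicated symmetric range of form levels, with Lemma \ref{symmetric} furnishing the partner level $G_{m-1-q}$ automatically.
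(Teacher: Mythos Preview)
Your approach is essentially the same as the paper's: both deduce the proposition from \cite{MunasingheStraube12}. The paper's proof is literally two sentences of citation---case (i) is Corollary 1 combined with Example 1 there, case (ii) is Example 4 there---whereas you have written out an informal sketch of what those examples do. That sketch is accurate in outline.

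One small imprecision: in case (i) you speak of a ``constant complex tangential vector field in the direction of the axis of the cone.'' A constant vector field in $\mathbb{C}^{n}$ will not in general be tangent to $M$ away from $p$; what one actually uses in \cite{MunasingheStraube12} is a complex tangential field on $M$ whose value at $p$ points in the cone direction, chosen so that the flow stays inside the cone for short times (this is where the aperture of the cone is used). The divergence bound then requires a short argument rather than being literally ``zero divergence.'' Similarly, in case (ii) the null direction need not vary smoothly on $M$ as a whole (only on $K$), so a bit more care is needed in building the local field. These are exactly the ``bookkeeping'' issues you flag in your final paragraph, and they are handled in the cited examples.

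Finally, the appeal to Lemma \ref{symmetric} at the very end is superfluous: the hypotheses in (i) and (ii) are independent of the form level, so \cite{MunasingheStraube12} already yields compactness of $G_{q}$ for all $1\leq q\leq m-2$ at once, without needing the symmetry.
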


\begin{proof}. 
Case (i) is Corollary 1 in \cite{MunasingheStraube12} combined with Example 1 there. Case (ii) is in Example 4 in \cite{MunasingheStraube12}. 
\end{proof}
When $q=0$ or $q=(m-1)$, one still has compactness estimates under the usual orthogonality conditions, analogous to estimates (6) and (7) in Theorem 2 of \cite{MunasingheStraube12}; in this form, the result also holds when the CR-dimension of $M$ is one, i.e. $m=2$. 

It is worthwhile to note that the cases (i) and (ii) in Proposition \ref{geometric} are mutually exclusive (assuming that $K$ is not empty). Indeed, the comparable eigenvalues condition is equivalent to maximal estimates (locally, \cite{Derridj91b}, Th\'{e}or\`{e}me on p. 633 and references there), while (ii) implies that the Levi form is locally diagonalizable (\cite{MunasingheStraube07}, Example 3 and Remark 5; compare also \cite{Machedon88}, Lemma 2.1). Yet if $p\in M$ is a weakly pseudoconvex point near which maximal estimates hold, the Levi from cannot be diagonalizable near $p$ (\cite{Derridj91}). \cite{Derridj91b} deals with actual hypersurfaces, and \cite{Derridj91} deals with $\overline{\partial}$, rather than with $\overline{\partial}_{M}$, so that the previous argument needs justification. One option is to check that the arguments in these papers can be modified to work in our situation; we expect this to be the case, but we have not checked. Alternatively, we can take a more pedestrian route and 
invoke the idea from (the sketch of) the proof of Theorem \ref{compact-q} and work with a pseudoconvex domain $\Omega \subset \mathbb{C}^{m}$ that contains a piece of $\pi(M)$, near $\pi(p)$, in its boundary. Then the Levi form of $b\Omega$ at $\pi(p)$ will have exactly one degenerate eigenvalue if and and only if this is the case for the Levi form of $M$ at $p$. Similarly, it will have comparable eigenvalues if and only if this is the case for the Levi form of $M$ at $p$. Now we use \cite{Derridj91}, Th\'{e}or\`{e}me 2.2 to conclude that the latter happens if and only if $\overline{\partial}$ on $\Omega$ satisfies maximal estimates at $\pi(p)$, and \cite{Derridj91}, Th\'{e}or\`{e}me 7.1 to see that this contradicts diagonalizability of the Levi form of $b\Omega$ at $\pi(p)$, and hence the assumption that this Levi form only have one degenerate eigenvalue.

\smallskip

It is natural to ask what the relationship is between the potential theoretic conditions in Theorem \ref{compact-q} and the geometric conditions discussed above. Since the latter do not discriminate among form levels, one would for example ask how $(P_{1})$ relates to the flow conditions. This question is not understood at all. In our opinion, it is well worthwhile pursuing, with an eye toward characterizing compactness of the complex Green operators in terms of properties of $M$.

\section{Sobolev estimates}\label{Sobolev}

In this section, $M$ is again a smooth, compact, pseudoconvex, orientable CR-submanifold of hypersurface type of CR-dimension $(m-1)$. We address the question of when the complex Green operators on $M$ satisfy estimates in $L^{2}$-Sobolev spaces. We will also consider `the' Szeg\"{o} projection. More precisely, recall that $H_{q}$ denotes the orthogonal projection onto $\mathcal{H}_{q}(M)$ (so that $H_{0}$ is the usual Szeg\"{o} projection from $L^{2}(M)$ onto the square integrable CR-functions on $M$). In addition, we consider the  orthogonal projections $S^{\prime}_q:L^2_{(0,q)}(M)\rightarrow Im(\bar\partial_M)$, where $Im(\overline{\partial}_{M})$ is interpreted to be $\{0\}$ when $q=0$, and $S^{\prime\prime}_q: L^2_{(0,q)}(M)\rightarrow Im(\bar\partial^*_M)$, where $Im(\overline{\partial}_{M}^{*})$ is interpreted as $\{0\}$ when $q=(m-1)$. For $0\leq q\leq (m-1)$, these projections are associated with the following orthogonal decomposition: 
\begin{equation}\label{orth decomp}
u=S'_qu+S''_qu+H_qu\;,\; u\in L^{2}_{0,q}(M)\;.
\end{equation}
When proving Sobolev estimates for the complex Green operators $G_{q}$ and the related operators $S_{q}^{\prime}$ and $S_{q}^{\prime\prime}$, one needs control over certain components of commutators of vector fields (measuring derivatives) with $\overline{\partial}_{M}$ and $\overline{\partial}_{M}^{*}$. These components are conveniently expressed in terms of a $1$-form which we now introduce.

\medskip

Recall from section \ref{CRsubmanifolds} that since $M$ is orientable, there exists  a purely imaginary vector field $T$, orthogonal to $T^{\mathbb{C}}M$, and of unit length. Denote by $\eta$ the purely imaginary $1$-form on $M$ dual to $T$, that is, verifying $\eta(T)\equiv 1$ and $\eta(T^{1,0}M\oplus T^{0,1}M)\equiv 0$. The form we are interested in is
\begin{equation}\label{alpha}
 \alpha=-\mathcal{L}_T\eta\; ,
 \end{equation}
the negative Lie derivative of $\eta$ in the direction $T$. Note that $\alpha$ is real (since both $T$ and $\eta$ are purely imaginary). To see how $\alpha$ acts on $T^{0,1}(M)$, let $L \in T^{1,0}(M)$. Then
\begin{equation}\label{actionofalpha}
\alpha(\overline{L})  =-(\mathcal{L}_T\eta)(\overline{L}) = -\left(T\eta(\overline{L})-\eta([T,\overline{L}])\right)
= \eta([T,\overline{L}])\;
\end{equation}
(since $\eta(\overline{L})\equiv 0$). In other words, $\alpha(\overline{L})$ is the $T$-component of $[T,\overline{L}]$ modulo $T^{1,0}M\oplus T^{0,1}M$. Taking conjugates in \eqref{actionofalpha} shows that $\alpha$ acts in the same way on $T^{1,0}(M)$, hence on $T^{1,0}(M)\oplus T^{0,1}(M)$. Components of commutators as in \eqref{actionofalpha} (and their conjugates) are exactly what needs to be controlled when proving Sobolev estimates for the complex Green operators; herein lies the significance of the form $\alpha$ for this section. This form was introduced into the literature by D'Angelo \cite{D'Angelo80, D'Angelo86, D'Angelo93}, also in the context of commutators as above.

An important property of \emph{$\alpha$ is that it is closed on the null space of the Levi form}: 
\begin{lemma}\label{alphaclosed}
\begin{equation}\label{closed}
d\alpha|_{{\mathcal{N}_z}}=0,\quad \forall z\in M\;.
\end{equation}
\end{lemma}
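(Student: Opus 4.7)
The plan is to reformulate $d\alpha$ so that all subsequent computations reduce to manipulations of the Levi form. Since $\eta(T)\equiv 1$ is constant, Cartan's magic formula gives $\mathcal{L}_T\eta = d(i_T\eta)+i_Td\eta = i_Td\eta$; hence $\alpha = -i_Td\eta$. Taking an exterior derivative and using that $\mathcal{L}_T$ commutes with $d$ (together with $d^2=0$) then yields the clean identity $d\alpha = -\mathcal{L}_Td\eta$, which I will expand via the Leibniz rule for Lie derivatives of $2$-forms.

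The only non-formal input needed is the behaviour of $d\eta$ on the CR-directions. For $X,Y$ local sections of $T^{1,0}M$, the identity $d\eta(V,W)=V\eta(W)-W\eta(V)-\eta([V,W])$ together with $\eta|_{T^{1,0}M\oplus T^{0,1}M}\equiv 0$ and the involutivity of $T^{1,0}M$ (and hence of $T^{0,1}M$) will give $d\eta(X,Y)=d\eta(\overline X,\overline Y)=0$, while the definition \eqref{Levi} of the Levi form yields $d\eta(X,\overline Y)=-\lambda(X,\overline Y)$; additionally $d\eta(T,\,\cdot\,)=-\alpha$ on $T^{1,0}M\oplus T^{0,1}M$ by the very definition of $\alpha$. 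With these values in hand I will decompose $[T,X]=\alpha(X)T+P_X+\overline{Q_X}$ with $P_X,Q_X$ local $T^{1,0}M$-sections (analogously for $[T,Y]$, and, via $\overline T=-T$, for $[T,\overline Y]$) and substitute into the Leibniz expansions of $(\mathcal{L}_Td\eta)(X,Y)$ and $(\mathcal{L}_Td\eta)(X,\overline Y)$. The cross terms of shape $\alpha(X)\alpha(\,\cdot\,)$ are expected to cancel cleanly against the contributions coming from the $T$-component of the commutators, leaving expressions built entirely out of the Levi form: a purely algebraic expression for $d\alpha(X,Y)$ whose summands are all of the form $\lambda(\,\cdot\,,\overline{Q_\bullet})$, and for $d\alpha(X,\overline Y)$ the same type of algebraic terms plus the single transverse derivative $T\lambda(X,\overline Y)$.

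Specialising now to $X(z),Y(z)\in\mathcal N_z$, every purely algebraic Levi-form term vanishes at $z$ outright, since a vector in the null space of a Hermitian form annihilates that form in either slot. Only one piece resists: the term $(T\lambda(X,\overline Y))(z)$ in the mixed case, and this is where pseudoconvexity becomes decisive---it is the sole step that uses more than the formal CR-calculus. The argument I have in mind is to extend $X,Y$ smoothly and exploit positivity: the nonnegative smooth functions $\lambda(X,\overline X)$ and $\lambda(Y,\overline Y)$ attain their global minimum value $0$ at $z$, so they vanish to order at least two there. The Cauchy--Schwarz inequality for positive semidefinite Hermitian forms, $|\lambda(X,\overline Y)|^{2}\leq\lambda(X,\overline X)\,\lambda(Y,\overline Y)$, then forces $\lambda(X,\overline Y)$ itself to vanish to order at least two at $z$, killing every first-order directional derivative there and in particular yielding $(T\lambda(X,\overline Y))(z)=0$. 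The conjugate case $d\alpha(\overline X,\overline Y)=0$ is free from $d\alpha(X,Y)=0$ because $\alpha$ is real, completing the verification of \eqref{closed}. The Cauchy--Schwarz/pseudoconvexity step is the only genuine obstacle; everything else is bookkeeping inside the $\mathcal{L}_Td\eta$ calculation.
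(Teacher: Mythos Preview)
Your argument is correct. The paper does not actually supply a proof of Lemma~\ref{alphaclosed}; it refers to \cite{BoasStraube93}, Lemma on p.~230, noting only that pseudoconvexity is essential. Your computation---writing $d\alpha=-\mathcal{L}_{T}d\eta$, expanding via the Leibniz rule, and reducing everything to the Levi form---is precisely the standard route taken there, and the one step where pseudoconvexity enters (the vanishing of $T\lambda(X,\overline{Y})$ at a point where $X,Y$ lie in $\mathcal{N}_{z}$, via Cauchy--Schwarz for the positive semidefinite form $\lambda$) is exactly the mechanism behind the paper's remark that pseudoconvexity is essential. In short, you have reconstructed the referenced proof.
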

Here, $\mathcal{N}_{z}$ denotes the null space of the Levi form of $M$ at $z\in M$. We refer the reader to \cite{BoasStraube93}, Lemma on page 230, for the proof. Pseudoconvexity of $M$ is essential.

Lemma \ref{alphaclosed} implies in particular that when $S\subset M$ is a complex submanifold of $M$, the restriction of $\alpha$ to $S$ defines a DeRham cohomology class on $S$ (\cite{BoasStraube93}). It is interesting that when $S\subset b\Omega$, where $\Omega$ is a smooth bounded pseudoconvex domain, the size of this class in a suitable norm on cohomology determines whether or not $\overline{\Omega}$ admits a Stein neighborhood basis; details are in \cite{BF78}.

\smallskip

We say that \emph{$\alpha$ is exact on the null space of the Levi form} if there exists $h\in C^\infty(M)$ such that 
\begin{equation}\label{exact}
dh(L)(z)=\alpha(L)(z), \quad L\in\mathcal{N}_z,\; z\in M\; .
\end{equation}
Exactness of $\alpha$ on the null space of the Levi form turns out to be precisely the condition one is led to when looking for vector fields that have good commutation properties with $\overline{\partial}_{M}$ and $\overline{\partial}_{M}^{*}$. Theorem \ref{Theo4}, from \cite{StraubeZeytuncu15}, is the main result of this section. It is not obvious how widely Theorem \ref{Theo4} applies, but we shall see that the closedness condition on $\alpha$ can indeed be verified on large classes of CR-submanifolds. 
Denote by $\|u\|_{s}$ the norm of $u$ in $W^{s}_{(0,q)}(M)$.
\begin{theorem}\label{Theo4}
Let $M$ be a smooth compact pseudoconvex orientable CR-submanifold of $\mathbb{C}^{n}$ of hypersurface type, of CR-dimension $(m-1)$. Assume that $\alpha = \alpha_{M}$ is exact on the null space of the Levi form. Then, for every nonnegative real number $s$, there is a constant $C_{s}>0$ such that for all $u\in L^{2}_{(0,q)}(M)$, $0\leq q\leq (m-1)$,
\begin{equation}\label{Theo4-1}
\Vert S'_qu\Vert^2_s+\Vert S''_qu\Vert^2_s+\Vert H_qu\Vert^2_s\leq C_s \Vert u\Vert^2_s\;,
\end{equation}
\begin{equation}\label{Theo4-2}
\Vert u\Vert^2_s \leq C_s \left(\Vert \bar\partial_Mu\Vert^2_s +\Vert \bar\partial^*_Mu\Vert^2_s+\Vert  u\Vert^2_{L^2_{(0,q)}(M)} \right),\quad 1\leq q\leq m-2 \;,
\end{equation}
\begin{equation}\label{Theo4-3}
\Vert u\Vert^2_s \leq C_s \left(\Vert \bar\partial_Mu\Vert^2_s +\Vert \bar\partial^*_Mu\Vert^2_s\right), \quad u\in\mathcal{H}_q(M)^{\perp},
\end{equation}
\begin{equation}\label{Theo4-4}
\Vert G_qu\Vert_s^2\leq C_s\Vert u\Vert_s^2 \;.
\end{equation}
\end{theorem}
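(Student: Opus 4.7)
The four estimates are intertwined, but the technical heart of the theorem is \eqref{Theo4-2}; the other three parts will follow by combining it with the $L^{2}$ Hodge theory from Section \ref{L-2}. My plan is therefore to first build a good transverse vector field $Y_{t}$ out of the potential $h$ furnished by the exactness hypothesis, then establish \eqref{Theo4-2} by induction on $s$, and finally deduce \eqref{Theo4-1}, \eqref{Theo4-3}, and \eqref{Theo4-4}.

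Fix $h\in C^{\infty}(M)$ with $dh(L)=\alpha(L)$ for every $L\in\mathcal{N}_{z}$. Working locally in a CR-frame $L_{1},\dots,L_{m-1},T$ and for each large parameter $t$, I would construct a smooth real vector field $Y_{t}=T+\sum_{j}(a_{j}^{(t)}L_{j}+\overline{a_{j}^{(t)}L_{j}})$ whose commutators $[Y_{t},\overline{L_{k}}]$ have small $T$-component modulo $T^{1,0}M\oplus T^{0,1}M$: on the null space $\mathcal{N}_{z}$ of the Levi form, the obstruction $\alpha(\overline{L_{k}})=\eta([T,\overline{L_{k}}])$ is cancelled by a term built from $dh=\alpha$, while off the null space it is dominated by a large multiple of the (positive semidefinite, by pseudoconvexity) Levi form, which can be absorbed. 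The coefficients $a_{j}^{(t)}$ are determined by solving a finite-dimensional linear problem on the orthogonal complement of $\mathcal{N}_{z}$ in $T^{1,0}_{z}M$, and a partition of unity patches the local constructions together. This is the CR-adaptation of the Boas--Straube vector field technique, following \cite{BoasStraube93, StraubeZeytuncu15}.

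With $Y_{t}$ in hand I would prove \eqref{Theo4-2} by induction on $s$. The base case $s=0$ is Lemma \ref{basiclemma}. For the inductive step, use that a Sobolev norm of order $s$ on $M$ is equivalent to the sum of $L^{2}$-norms of $s$-fold derivatives built out of the complex tangential fields $L_{j},\overline{L_{j}}$ and the transverse field $Y_{t}$. The complex tangential derivatives of $u$ are controlled pointwise by $\bar\partial_{M}u$ and $\bar\partial_{M}^{*}u$ up to zero-order terms, so the essential task is to estimate $\|(Y_{t})^{s}u\|_{0}$. One applies Lemma \ref{basiclemma} to $(Y_{t})^{s}u$ and commutes $(Y_{t})^{s}$ past $\bar\partial_{M}$ and $\bar\partial_{M}^{*}$. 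Each resulting commutator term is, by the good properties of $Y_{t}$, either an $\varepsilon$-small Levi-form contribution that is absorbed into the left-hand side after tuning $t$ large and $\varepsilon$ small, or a term of Sobolev order at most $s-1$ that is handled by the inductive hypothesis. Since $M$ has no boundary, no boundary terms appear when integrating by parts, which simplifies the bookkeeping considerably compared to the analogous $\bar\partial$-Neumann problem.

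Once \eqref{Theo4-2} is established the remaining three parts follow quickly. Combining \eqref{Theo4-2} with the closed range estimate on $\mathcal{H}_{q}(M)^{\perp}$ from Theorem \ref{closedrange} yields \eqref{Theo4-3}. The bound on $H_{q}$ in \eqref{Theo4-1} is immediate from the finite dimensionality of $\mathcal{H}_{q}(M)$ and the smoothness of its elements, the latter obtained by bootstrapping \eqref{Theo4-3} applied to harmonic forms. The bounds on $S_{q}^{\prime}$ and $S_{q}^{\prime\prime}$ in \eqref{Theo4-1} come from the identities $S_{q}^{\prime}=\bar\partial_{M}\bar\partial_{M}^{*}G_{q}$ and $S_{q}^{\prime\prime}=\bar\partial_{M}^{*}\bar\partial_{M}G_{q}$ combined with \eqref{Theo4-4}. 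Finally, \eqref{Theo4-4} is obtained by applying \eqref{Theo4-3} to $G_{q}u\in\mathcal{H}_{q}(M)^{\perp}$ and invoking $\Box_{M,q}G_{q}u=u-H_{q}u$ to convert the right-hand side into $\|u\|_{s}$ plus a harmonic piece already controlled by the bound on $H_{q}$. The main obstacle will be the inductive step for \eqref{Theo4-2}: the global construction of $Y_{t}$ preserving the sign conditions that let pseudoconvexity help, together with the simultaneous balancing of the parameters $t$ and $\varepsilon$ across all form levels $1\le q\le m-2$, accounts for the bulk of the technical work.
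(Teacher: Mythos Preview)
Your overall strategy is the paper's: construct real vector fields transverse to $T^{\mathbb{C}}M$ whose commutators with $(0,1)$-fields have small $T$-component, then run the Boas--Straube induction. But the ansatz $Y_{t}=T+\sum_{j}(a_{j}^{(t)}L_{j}+\overline{a_{j}^{(t)}L_{j}})$ cannot work as written. When $L_{k}(z)\in\mathcal{N}_{z}$, integrability of $T^{0,1}M$ gives $[\overline{L_{j}},\overline{L_{k}}]_{T}=0$, and the Levi form terms $a_{j}^{(t)}[L_{j},\overline{L_{k}}]_{T}$ vanish; hence $[Y_{t},\overline{L_{k}}]_{T}(z)=[T,\overline{L_{k}}]_{T}(z)=\alpha(\overline{L_{k}})(z)$ no matter how the $a_{j}^{(t)}$ are chosen. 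There is simply no place for $h$ to enter, so nothing is ``cancelled by a term built from $dh$''. The remedy, and this is precisely the computation the paper carries out (see \eqref{localfields}--\eqref{atP}), is to put $e^{h}$, not $1$, in front of $T$: with $X=e^{h}T+\sum_{j}(b_{j}L_{j}+\overline{b_{j}L_{j}})$ one gets $[e^{h}T,\overline{L_{k}}]_{T}=e^{h}\bigl(\alpha(\overline{L_{k}})-\overline{L_{k}}h\bigr)$, which vanishes on $\mathcal{N}_{z}$ exactly because $dh|_{\mathcal{N}_{z}}=\alpha|_{\mathcal{N}_{z}}$. The exactness hypothesis is used only at this point. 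A further payoff is that the transverse part $e^{h}T$ is globally defined on $M$, so the partition-of-unity patching creates no new $T$-components.

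A secondary gap is in your sequential derivation of \eqref{Theo4-1}, \eqref{Theo4-3}, \eqref{Theo4-4}. Applying \eqref{Theo4-3} to $G_{q}u$ yields $\|G_{q}u\|_{s}\lesssim\|\overline{\partial}_{M}G_{q}u\|_{s}+\|\overline{\partial}_{M}^{*}G_{q}u\|_{s}$, and the identity $\Box_{M,q}G_{q}u=u-H_{q}u$ does not by itself bound these first-order pieces in $W^{s}$ (only their $L^{2}$-norms, via $(\Box_{M,q}G_{q}u,G_{q}u)$). Likewise, obtaining the $S_{q}',S_{q}''$ bounds from $S_{q}'=\overline{\partial}_{M}\overline{\partial}_{M}^{*}G_{q}$ requires Sobolev control of $\overline{\partial}_{M}\overline{\partial}_{M}^{*}G_{q}$, which is strictly more than \eqref{Theo4-4}. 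In \cite{StraubeZeytuncu15} these operators ($G_{q}$, $\overline{\partial}_{M}G_{q}$, $\overline{\partial}_{M}^{*}G_{q}$, $S_{q}'$, $S_{q}''$, $H_{q}$) are estimated together inside the induction on $s$, not deduced one from another afterward.
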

\begin{proof}[Sketch of proof of Theorem \ref{Theo4}]
We mainly want to sketch how the condition that $\alpha = \alpha_{M}$ be closed on the null space of the Levi form arises when looking for the kinds of vector fields needed to run an adaption of the machine developed in \cite{BoasStraube91}\footnote{There is more to the story than meets the eye; closedness of $\alpha$ on the null space of the Levi form and existence of `suitable' vector fields are `almost' equivalent, and are `almost' equivalent to two other conditions; see \cite{StraubeSucheston02}, where these notions are made precise.}; this amounts to Proposition 1 in \cite{StraubeZeytuncu15}. The details of what adaptions are necessary, and how to run the machine, are quite technical and lengthy, and we refer the reader to \cite{StraubeZeytuncu15}.
 
The vector fields needed to prove the desired Sobolev estimates are as follows (\cite{StraubeZeytuncu15}, Theorem 4). There should exist a family $\{X_{\varepsilon}\}$, $0<\varepsilon<\varepsilon_{0}$ for some $\varepsilon_{0}$, of real vector fields on $M$ with the property
\begin{equation}\label{vectorfields}
 1/C\leq |(X_{\varepsilon})_{T}|\leq C\;,\;\text{and}\;\left|\left([X_{\varepsilon}, \overline{Z}]\right)_{T}\right| \leq \varepsilon \;,\;Z\in T^{1,0}(M)\;,\;|Z|=1\;,
\end{equation}
where $C$ is a constant independent of $\varepsilon$ and the subscript $T$ denotes the $T$-component of a vector field modulo $T^{1,0}(M)\oplus T^{0,1}(M)$. The first condition says that the fields $X_{\varepsilon}$ are uniformly transversal to $T^{\mathbb{C}}(M)$. Two facts are essential for the construction of the family $\{X_{\varepsilon}\}$. First, complex tangential derivatives are benign, so that only the $T$-components of the commutators have to be controlled (\cite{BoasStraube91}, Lemma 1, \cite{StraubeZeytuncu15}, inequality (35); compare also \cite{BoasStraube91a}, inequality (3)). Second, control over ($T$-components of) commutators with fields in strongly pseudoconvex directions can be established for free (\cite{StraubeZeytuncu15}, page 1087; compare also \cite{BoasStraube91a}, \cite{Straube10a}, section 5.7; and see below). These two facts are reflected in the conditions on $\alpha$: $\alpha$ only measures $T$-components of commutators (\eqref{actionofalpha}), and closedness is only required 
on the null space of the Levi form.

Locally, near a point $p$, choose a basis $L_{1}, \hdots, L_{(m-1)}$ of $T^{1,0}(M)$. Then, the fields $X_{\varepsilon}$ we are looking for can locally be written as
\begin{equation}\label{localfields}
 X_{\varepsilon} = e^{g_{\varepsilon}}T + \sum_{j=1}^{m-1}\left(b_{\varepsilon,j}L_{j}+\overline{b_{\varepsilon, j}L_{j}}\right)\;,
\end{equation}
for smooth functions $g_{\varepsilon}$ and $b_{\varepsilon, j}$ that are to be determined. We may choose the basis $\{L_{j}\}_{j=1}^{m-1}$ so that at $p$, it diagonalizes the Levi form (but not necessarily in a neighborhood). The field $Z$ in \eqref{vectorfields} is a linear combination of $L_{1}, \hdots, L_{(m-1)}$, whence we look at commutators $[X_{\varepsilon}, \overline{L_{k}}]$, $1\leq k\leq (m-1)$. A computation using \eqref{actionofalpha}, \eqref{localfields} and the fact that $L_{1}, \hdots, L_{(m-1)}$ diagonalizes the Levi form at $p$ gives, at the point $p$,
\begin{equation}\label{atP}
 \left[X_{\varepsilon}, \overline{L_{k}}\right]_{T}(p) = -e^{g_{\varepsilon}(p)}\left(\overline{L_{k}}g_{\varepsilon}(p)-\alpha(\overline{L_{k}})(p)\right) + b_{\varepsilon, k}(p)\left[L_{k}, \overline{L_{k}}\right]_{T}(p)\;
\end{equation}
(see \cite{StraubeZeytuncu15}, proof of Proposition 1, for details). First, consider the case where $L_{k}(p)\in \mathcal{N}_{p}$. Then the second term on the right-hand side of \eqref{atP} vanishes, and we are left with the first term only. A moment's contemplation shows how the condition on $\alpha$ now enters. If we set $g_{\varepsilon} = h$ for all $\varepsilon$, where $h\in C^{\infty}(M)$ is such that $dh|_{\mathcal{N}_{z}} = \alpha|_{\mathcal{N}_{z}}$, $z\in M$, then the first term also vanishes. Therefore, the commutator condition in \eqref{vectorfields} is satisfied at $p$: $\left|[X_{\varepsilon}, \overline{L_{k}}]_{T}\right|(p)=0$ (regardless of what $b_{\varepsilon, k}$ is, we will set it equal to zero below). On the other hand, for those $k$ where $L_{k}(p) \notin \mathcal{N}_{p}$, $[L_{k}, \overline{L_{k}}]_{T} \neq 0$, and we can solve for $b_{\varepsilon, k}(p)$ to make the right-hand side of \eqref{atP} equal to zero (with $h(p)$ in place of $g_{\varepsilon}(p)$). The latter is the meaning of 
controlling commutators with fields in strictly pseudoconvex directions for free, alluded to above.

The conclusion is that if we define $X_{\varepsilon, p}$ near $p$ by $e^{h}T+\sum_{L_{k}\notin \mathcal{N}_{p}}\left(b_{\varepsilon, k}L_{k}+\overline{b_{\varepsilon, k}L_{k}}\right)$, with the constants $b_{\varepsilon, k}$ from the previous paragraph, then $[X_{\varepsilon, p}, \overline{L_{k}}]_{T}(p) = 0$. By continuity, there is a neighborhood of $p$ such that 
$[X_{\varepsilon, p}, \overline{Z}]_{T}(z) < \varepsilon$ for $z$ in this neighborhood and $Z$ as in \eqref{vectorfields}. Covering $M$ with finitely many of these neighborhoods and patching together the locally defined fields via a partition of unity produces the desired family $\{X_{\varepsilon}\}$ (because the $T$-component of these local fields is defined globally, namely $e^{h}T$, the terms arising from derivatives hitting the cutoff functions have vanishing $T$-component, and so are harmless). For details, we refer again to \cite{StraubeZeytuncu15}, proof of Proposition 1.
\end{proof}
We have noted in section \ref{L-2} that while $\mathcal{H}_{q}(M)$ is finite dimensional for $1\leq q\leq (m-2)$, it need not be trivial. The following immediate consequence of \eqref{Theo4-2} is thus worth noting: harmonic forms are smooth.
\begin{corollary}\label{harmsmooth}
 Under the assumptions of Theorem \ref{Theo4}, $\mathcal{H}_{q}(M) \subset C^{\infty}_{(0,q)}(M)\;,\;1\leq q\leq (m-2)\;.$
\end{corollary}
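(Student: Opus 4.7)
The plan is simply to feed a harmonic form into the Sobolev estimate \eqref{Theo4-2} and observe that the right-hand side collapses to the $L^2$-norm, so every Sobolev norm is finite and standard embedding gives $C^\infty$ regularity.

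More precisely, fix $1\le q\le m-2$ and $u\in\mathcal{H}_q(M)$. By definition of the harmonic space, $\overline{\partial}_Mu=0$ and $\overline{\partial}_M^\ast u=0$, so both contribute zero to the right-hand side of \eqref{Theo4-2}. Applying that inequality for an arbitrary $s\ge 0$ therefore yields
\begin{equation*}
\|u\|_s^2 \;\le\; C_s\,\|u\|_{L^2_{(0,q)}(M)}^2,
\end{equation*}
so $u\in W^s_{(0,q)}(M)$ for every $s\ge 0$. Since $M$ is a compact manifold, the Sobolev embedding theorem gives $u\in C^\infty_{(0,q)}(M)$, which is the claim.

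The only point that requires care is the interpretation of \eqref{Theo4-2}: as stated, it is formulated for $u\in L^2_{(0,q)}(M)$, but to read off $W^s$ regularity we must know that the inequality is genuinely a smoothing statement, not just an \emph{a priori} bound on already smooth forms. This is the usual obstacle in this type of corollary: one must either appeal to an elliptic-regularization argument (approximating $u$ by $u_\delta$ for which both sides are finite, using uniformity of $C_s$ in $\delta$, and passing to the limit) or take for granted that the statement of Theorem \ref{Theo4} already incorporates this smoothing interpretation, as is standard in the $\overline{\partial}_M$-literature. Either way, once \eqref{Theo4-2} is known to imply $u\in W^s_{(0,q)}(M)$ whenever its right-hand side is finite, the corollary follows immediately by letting $s\to\infty$ and invoking $\bigcap_{s\ge 0}W^s_{(0,q)}(M)=C^\infty_{(0,q)}(M)$ on the compact manifold $M$.
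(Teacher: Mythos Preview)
Your argument is correct and is exactly what the paper intends: it does not give a separate proof, stating only that the corollary is ``an immediate consequence of \eqref{Theo4-2}.'' Your caveat about the a priori versus genuine-regularity reading of \eqref{Theo4-2} is legitimate in principle, but Theorem~\ref{Theo4} is stated for all $u\in L^{2}_{(0,q)}(M)$ and is meant (and proved in \cite{StraubeZeytuncu15}) as a genuine regularity statement, so no additional regularization step is needed here.
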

The estimates in Theorem \ref{Theo4} are not independent. For example, in \cite{HPR13}, the authors show among other things that for $1\leq q\leq (m-2)$, regularity of $G_{q}$ is equivalent to that of the three  projections $S_{(q-1)}^{\prime}+H_{(q-1)}$, $S_{q}^{\prime}+H_{q}$, and $S_{(q+1)}^{\prime}$ are (the first two are the projections onto the respective kernels of $\overline{\partial}_{M}$). Analogous results for the $\overline{\partial}$-Neumann operators and the Bergman projections had been known for quite some time (\cite{BoasStraube90}).

\smallskip

A first application of Theorem \ref{Theo4} concerns the situation when $M$ is strictly pseudoconvex except for a complex submanifold, say $S$. As mentioned above, in view of Lemma \ref{alphaclosed}, the restriction of $\alpha_{M}$ to $S$ is closed and so defines a DeRham cohomology class on $S$ which we denote by $[\alpha|_{S}]$. The following corollary is an analogue of the main result in \cite{BoasStraube93}, which deals with the $\overline{\partial}$-Neumann problem on pseudoconvex domains. The more general assumption on the submanifold $S$ used there, namely that at each point of $S$ its (real) tangent space be contained in the null space of the Levi form of $M$, could also be used in the present context. But the special case formulated here is perhaps the most important case.
\begin{corollary}\label{submanifold}
Assume $M$ is as in Theorem \ref{Theo4}, strictly pseudoconvex except for a smooth complex submanifold $S$ of $M$ (smooth as a submanifold with boundary). If $[\alpha|_{S}] = 0$, then $\alpha$ is exact on the null space of the Levi form, and the conclusions of Theorem \ref{Theo4} and Corollary \ref{harmsmooth} hold.
\end{corollary}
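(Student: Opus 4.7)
The plan is to construct the function $h \in C^{\infty}(M)$ required by the definition of exactness of $\alpha$ on the null space of the Levi form; once that is in hand, Theorem \ref{Theo4} and Corollary \ref{harmsmooth} deliver the conclusions directly. The construction has three steps: produce a primitive of $\alpha|_{S}$ on $S$, extend it smoothly to all of $M$, and verify the pointwise matching condition \eqref{exact} at points off and on $S$.

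First, since $S$ is a complex submanifold of $M$, its real tangent bundle $TS$ lies inside the null space of the Levi form of $M$, so after complexification $T^{1,0}_{z}S\subseteq \mathcal{N}_{z}$ for every $z\in S$. By Lemma \ref{alphaclosed} applied to $T^{1,0}_{z}S\oplus T^{0,1}_{z}S$, the restriction $\alpha|_{S}$ is a smooth closed real $1$-form on $S$, and hence represents $[\alpha|_{S}]$ in the DeRham cohomology of $S$ (smooth as a submanifold with boundary). By hypothesis this class vanishes, so there is $h_{0}\in C^{\infty}(S)$ with $dh_{0}=\alpha|_{S}$. Because $S$ is smoothly (and properly) embedded in the compact manifold $M$, a standard tubular neighborhood and cut-off argument extends $h_{0}$ to a function $h\in C^{\infty}(M)$ satisfying $h|_{S}=h_{0}$.

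Next, I would check the pointwise condition \eqref{exact}. At a point $z\in M\setminus S$, strict pseudoconvexity forces $\mathcal{N}_{z}=\{0\}$ and there is nothing to verify. At a point $z\in S$, the hypothesis that $M$ be strictly pseudoconvex off $S$, together with the complex submanifold structure of $S$, identifies $\mathcal{N}_{z}$ with $T^{1,0}_{z}S$: the inclusion $T^{1,0}_{z}S\subseteq \mathcal{N}_{z}$ is automatic, and the content of ``strictly pseudoconvex except for $S$'' is that the Levi-form degeneracy at $z$ is entirely along $S$, giving the reverse inclusion. Since $h|_{S}=h_{0}$, the real $1$-forms $dh$ and $\alpha$ coincide on $TS$ at each point of $S$; extending these real forms by $\mathbb{C}$-linearity to $TS\otimes \mathbb{C}$ yields agreement on $T^{1,0}_{z}S=\mathcal{N}_{z}$, as required.

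The main obstacle is precisely the identification $\mathcal{N}_{z}=T^{1,0}_{z}S$ for $z\in S$. This is where the strong phrasing ``strictly pseudoconvex except for $S$'' is really used; in the more general situation mentioned in the paragraph preceding the corollary, where one only assumes $T_{z}S\subseteq \mathcal{N}_{z}$, one would have to argue separately that the primitive $h$ produced above still matches $\alpha$ on whatever extra null directions of the Levi form survive along $S$. Everything else is essentially formal: the existence of a smooth primitive of a closed form with vanishing DeRham class, and a routine smooth extension from a compact submanifold with boundary to the ambient manifold.
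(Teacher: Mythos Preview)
Your overall strategy matches the paper's: find a primitive on $S$, extend to $M$, then verify \eqref{exact}. The gap is in the extension step and the claimed identification $\mathcal{N}_{z}=T^{1,0}_{z}S$ for $z\in S$.

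The hypothesis ``strictly pseudoconvex except for $S$'' says only that $S$ is the set of weakly pseudoconvex points; it does \emph{not} say that the Levi null space at $z\in S$ is confined to $T^{1,0}_{z}S$. The distinction the paper draws just before the corollary is between $S$ being a \emph{complex} submanifold versus merely a real submanifold with $T_{z}S\subset\mathcal{N}_{z}$; in neither case is equality $T^{1,0}_{z}S=\mathcal{N}_{z}$ assumed. So at a point $z\in S$ the null space can strictly contain $T^{1,0}_{z}S$, and your extension, which only controls $h|_{S}=h_{0}$ and hence only $dh|_{T_{z}S}$, need not satisfy $dh(L)=\alpha(L)$ for null directions $L$ transverse to $S$.

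The paper closes this gap by extending $\tilde{h}$ so that the full differential $d_{M}h$ agrees with $\alpha$ at every point of $S$, not merely on $T_{z}S$. Concretely, one prescribes the normal derivatives of $h$ along $S$ to match the values of $\alpha$ on a normal frame; this is a Whitney-type first-order extension and is where the references to \cite{StraubeZeytuncu15}, \cite{BoasStraube93}, and \cite{Straube10a} are invoked. With that stronger extension, \eqref{exact} holds for every $L\in\mathcal{N}_{z}$ regardless of whether $\mathcal{N}_{z}$ exceeds $T^{1,0}_{z}S$. Your argument becomes correct once you replace the routine cut-off extension by this first-order matching.
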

The assumption on $[\alpha|_{S}]$ is of course satisfied when $S$ is simply connected. However, $[\alpha|_{S}]$ can vanish also when $S$ is not simply connected. Simple examples are annuli in the boundaries of certain Hartogs domains in $\mathbb{C}^{2}$ called `nowhere wormlike' in \cite{BoasStraube92} (considered there in connection with regularity of the Bergman projection). 
\begin{proof}[Proof of Corollary \ref{submanifold}]
Because $[\alpha|_{S}]=0$, there is a smooth function $\tilde{h}$ on $S$ (smooth up to the boundary of $S$) which satisfies $d_{S}\tilde{h}=\alpha|_{S}$. Now one extends $\tilde{h}$ to a smooth function $h$ on $M$ such that $d_{M}h=\alpha$ at all points of $S$. Details are in \cite{StraubeZeytuncu15}, section 4; compare also \cite{BoasStraube93}, section 3,  and \cite{Straube10a}, section 5.10.
\end{proof}

It would be very interesting to know whether a `converse' of Corollary \ref{submanifold} holds. That is, assume $M$ and $S$ are is as in Corollary \ref{submanifold}, and $G_{1}$ satisfies exact Sobolev estimates ($\|G_{1}u\|_{k} \leq C_{k}\|u\|_{k}$, $k\in \mathbb{N}$), or is just globally regular ($G_{1}(C^{\infty}_{(0,1)}(M)) \subset C^{\infty}_{(0,1)}(M)$). Does it follow that $[\alpha|_{S}] = 0\,$? This question is also open for the $\overline{\partial}$-Neumann operator when $M$ is the boundary of a pseudoconvex domain. In the case of the worm domains, where $N_{1}$ is known not to be globally regular (\cite{Christ96}), this failure of regularity is linked to the nonvanishing of the cohomology class of $\alpha$ on the critical annulus, compare \cite{Christ96}, page 1175 and \cite{BoasStraube92}, Proposition 1. A further result pointing in the same direction is in \cite{Barrett94}.

The ideas around Corollary \ref{submanifold} can also be developed when there is a foliation by complex manifolds in $M$, that is, $M$ is strictly pseudoconvex except for a flat piece that is foliated by complex submanifolds. Doing so reveals interesting connections with foliation theory. For example, in this situation, exactness of $\alpha$ on the null space of the Levi form amounts to solving $dh|_{\mathcal{L}}=\alpha|_{\mathcal{L}}$ for each leaf $\mathcal{L}$ of the foliation. This question turns out to be equivalent to whether or not the foliation can be defined \emph{globally} by a closed one-form, a question much studied in foliation theory. And the cohomology class of $\alpha$ on each leaf coincides with the infinitesimal holonomy of the leaf. We refer the reader to \cite{StraubeSucheston03}, \cite{BDD07}, section 3.6, and \cite{Straube10a}, section 5.11 for more on these connections.

\smallskip

The second application of Theorem \ref{Theo4} that we wish to discuss concerns submanifolds $M$ that can be defined by plurisubharmonic defining functions. Deducing the result from Theorem \ref{Theo4} relies on Theorem \ref{theo1}. When $M$ is the boundary of a pseudoconvex domain, Theorem 1 is not an issue, and the result was obtained in \cite{BoasStraube91}.

Because $M$ is orientable, there is a tubular neighborhood $V$ of $M$ so that in $V$, $M$ is defined by a set of $l=2n-(2m-1)$ smooth functions $\rho_{1}, \hdots, \rho_{l}$ whose gradients do not vanish simultaneously on $M$; $M=\{z\in V|\rho_{1}(z)=\hdots=\rho_{l}(z)=0\}$. The following result was recently shown in \cite{StraubeZeytuncu15}.
\begin{corollary}\label{plurisubdefining}  
 Let $M$ be a smooth compact pseudoconvex orientable CR-submanifold of $\mathbb{C}^{n}$ of hypersurface type. Assume that $M$ admits a set of plurisubharmonic defining functions in some neighborhood. Then $\alpha =\alpha_{M}$ is exact on the null space of the Levi form. Consequently, the conclusions of Theorem \ref{Theo4} and Corollary \ref{harmsmooth} hold.
\end{corollary}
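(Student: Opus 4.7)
The plan is to reduce Corollary \ref{plurisubdefining} to the pseudoconvex-domain setting of \cite{BoasStraube91} via the one-sided complexification of Theorem \ref{theo1}. By that theorem, $M$ is the boundary of a complex $m$-dimensional submanifold $\widehat{M} \subset \mathbb{C}^n$. Since $\widehat{M}$ is a complex submanifold of $\mathbb{C}^n$, each restriction $\widetilde{\rho_j} := \rho_j|_{\widehat{M}}$ is plurisubharmonic on $\widehat{M}$ and vanishes on $M$; so $(\widehat{M}, M)$ plays the role of a (complex $m$-dimensional) pseudoconvex domain whose boundary carries plurisubharmonic defining functions.

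The first substantive step is to combine the $\widetilde{\rho_j}$ into a single plurisubharmonic defining function $\rho$ for $M$ in $\widehat{M}$. Because $M$ has real codimension one in $\widehat{M}$, the differentials $d\widetilde{\rho_j}(p)$ at each $p \in M$ all lie on the real conormal line of $M$ in $\widehat{M}$, hence are real multiples of a single covector. Choosing signs $\varepsilon_j \in \{\pm 1\}$ compatible with the pseudoconvex orientation of $\widehat{M}$ together with positive coefficients $c_j$, the sum $\rho = \sum_j c_j\, \varepsilon_j \rho_j$ is plurisubharmonic on a neighborhood of $M$ in $\widehat{M}$ and is a defining function of $M$ in $\widehat{M}$. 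When a globally coherent sign choice fails, one instead works locally using local plurisubharmonic defining functions and glues the resulting locally defined functions $h$ below via a partition of unity.

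Having such a $\rho$, one adapts the construction of \cite{BoasStraube91} to $(\widehat{M}, M)$. The decisive algebraic input is that plurisubharmonicity of $\rho$ forces any $L \in T^{1,0}_p M$ lying in the null space $\mathcal{N}_p$ of the Levi form to lie in the kernel of the complex Hessian of $\rho$ at $p$: a null vector of a positive semidefinite Hermitian form lies in its kernel. Consequently, all complex tangential derivatives of $\bar\partial\rho$ along $L$ vanish at $p$. In a local frame adapted to the Levi form as in the sketch of Theorem \ref{Theo4}, this vanishing permits an explicit identification of the $T$-component of the commutator $[T, \bar{L}]$ with $\bar{L}h$, where $h$ is a smooth function on $M$ constructed explicitly from $\rho$. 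Thus $\alpha(\bar{L}) = \bar{L}h$ on $\mathcal{N}$, which is the desired exactness of $\alpha$ on the null space of the Levi form. Theorem \ref{Theo4} and Corollary \ref{harmsmooth} then yield the Sobolev estimates and smoothness of harmonic forms.

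The main obstacle is the construction of the plurisubharmonic defining function $\rho$ on $\widehat{M}$: the signs of the various $\widetilde{\rho_j}$ must be combined coherently to produce a single function with nonvanishing gradient on $M$, and the \cite{BoasStraube91} machinery must be transported from a Euclidean pseudoconvex domain to the complex manifold-with-boundary $(\widehat{M}, M)$. These are the points where the technical care of \cite{StraubeZeytuncu15} is concentrated.
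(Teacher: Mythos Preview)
Your overall strategy---pass to the one-sided complexification $\widehat{M}$, restrict the $\rho_{j}$, manufacture a single plurisubharmonic defining function $\rho$ for $M$ in $\widehat{M}$, and then run the \cite{BoasStraube91} argument to produce $h$ with $dh=\alpha$ on $\mathcal{N}$---is exactly the paper's. The algebraic observation you single out (a null vector of a positive semidefinite Hermitian form lies in its kernel, so the cross term in the commutator formula vanishes) is also the decisive point in the paper's computation \eqref{comp2}--\eqref{comp3}.

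The genuine gap is in your construction of $\rho$. You propose $\rho=\sum_{j}c_{j}\varepsilon_{j}\rho_{j}$ with $\varepsilon_{j}\in\{\pm 1\}$, but multiplying a plurisubharmonic function by $-1$ destroys plurisubharmonicity; so unless every $\varepsilon_{j}=+1$, your $\rho$ need not be plurisubharmonic and the Cauchy--Schwarz step collapses. Your fallback---produce $h$ locally and glue by a partition of unity---does not work either: if $dh_{1}=\alpha$ and $dh_{2}=\alpha$ on $\mathcal{N}$ on overlapping patches, a convex combination $\chi h_{1}+(1-\chi)h_{2}$ satisfies $d(\chi h_{1}+(1-\chi)h_{2})=\alpha + (h_{1}-h_{2})\,d\chi$ on $\mathcal{N}$, and the error term has no reason to vanish.

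What you are missing is the paper's Lemma \ref{geometric2}, which removes the sign issue entirely. The second statement of Theorem \ref{theo1} says that near $M$, $\widehat{M}$ lies in the plurisubharmonic hull of $M$; since each $\rho_{j}$ is plurisubharmonic and vanishes on $M$, this forces $\rho_{j}\leq 0$ on $\widehat{M}$ near $M$, hence $\widetilde{T}\rho_{j}\geq 0$ for every $j$ (where $\widetilde{T}=-iJT$ is the outward normal to $\widehat{M}$). Because $\widetilde{T}$ is not tangent to $M$, not all $\widetilde{T}\rho_{j}$ vanish at a given point, so $\rho:=\rho_{1}+\cdots+\rho_{l}$ is already a global plurisubharmonic defining function for $M$ in $\widehat{M}$ with $\widetilde{T}\rho>0$. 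No sign choices and no patching are needed; the function $h$ is then given globally by $(\partial\rho-\overline{\partial}\rho)=e^{-h}\eta$.
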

The geometric observation needed is as follows (\cite{StraubeZeytuncu15}). Recall the one-sided complexification $\widehat{M}$ of $M$ from Theorem \ref{theo1}. Denote by $\widetilde{T}$ the real unit normal to $M$ pointing outside $\widehat{M}$, $\widetilde{T}=-iJT$. For $z\in M$, denote by $C_{z}$ the positive cone generated by the gradients of the $\rho_{j}$s in $\mathbb{C}^{n} \approx \mathbb{R}^{2n}$, $C_{z}=\{c_{1}\nabla\rho_{1}(z)+ \hdots +c_{l}\nabla\rho_{l}(z)|\,c_{1}\geq 0, \hdots, c_{l}\geq 0\}$, and by $\widehat{C_{z}}$ its dual cone.
\begin{lemma}\label{geometric2}
With the notations from above, let $c_{1}, \hdots, c_{l}$ be strictly positive constants. Then $\widetilde{T}\left(\sum_{j=1}^{l}c_{j}\rho_{j}\right) > 0$ on $M$. In other words, near $M$, $\widehat{M}$ is contained in the union $\cup_{z\in M}(z+\widehat{C_{z}})$.
\end{lemma}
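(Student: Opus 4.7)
The first statement reduces to two pointwise claims at each $z\in M$: (a) $\widetilde{T}\rho_j(z)\ge 0$ for every $j$, and (b) at least one $\widetilde{T}\rho_j(z)$ is strictly positive. Granted these, $\widetilde{T}\bigl(\sum_j c_j\rho_j\bigr) = \sum_j c_j \widetilde{T}\rho_j > 0$ is immediate from $c_j>0$. The geometric reformulation then amounts to tracking signs: claim (a) says $\langle -\widetilde{T}(z),\nabla\rho_j(z)\rangle \le 0$, placing $-\widetilde{T}(z)$ in the polar dual $\widehat{C_z}$, and a tubular description of $\widehat{M}$ near $M$ in which points are written as $z - t\widetilde{T}(z)$ plus curvature corrections identifies them with elements of $z+\widehat{C_z}$.

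For (a), I would apply the second half of Theorem \ref{theo1} in a neighborhood $U$ on which all the $\rho_j$ are simultaneously plurisubharmonic. That theorem places $\widehat{M}$, near $M$, inside the plurisubharmonic hull of $M$ with respect to functions plurisubharmonic on $U$; since each $\rho_j$ is such a function and $\rho_j\equiv 0$ on $M$, this forces $\rho_j \le 0$ on $\widehat{M}$ near $M$. Because $\widetilde{T}$ points outside $\widehat{M}$ by definition, the field $-\widetilde{T}$ points from $M$ into $\widehat{M}$, and the directional derivative at $M$ satisfies $-\widetilde{T}\rho_j|_M \le 0$, which is (a).

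For (b), I first verify that $\widetilde{T}=JN$ (with $N\in TM$ the real unit vector orthogonal to $T^{\mathbb{C}}M$) is actually normal to $M$ in $\mathbb{C}^n$: the $J$-invariance of $T^{\mathbb{C}}M$ together with $N\perp T^{\mathbb{C}}M$ give $\langle JN,X\rangle = -\langle N,JX\rangle = 0$ for $X\in T^{\mathbb{C}}M$, while the skew-symmetry $J^T=-J$ (which follows from $J^2=-I$ and $J$ orthogonal) gives $\langle JN,N\rangle = 0$; together these yield $JN\perp TM$. Next, the gradients $\{\nabla\rho_j(z)\}_{j=1}^l$ lie in the normal space, are $\mathbb{R}$-linearly independent because $d\rho_1\wedge\cdots\wedge d\rho_l\neq 0$, and number exactly $l=2n-(2m-1)$, the real dimension of the normal space to $M$, so they form a basis of it. Since $\widetilde{T}(z)$ is a \emph{nonzero} vector in this space, it cannot be orthogonal to every basis element, so some $\widetilde{T}\rho_j(z) = \langle \widetilde{T}(z),\nabla\rho_j(z)\rangle$ is nonzero, and hence strictly positive by (a).

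The only real conceptual obstacle is in step (a): recognizing that the \emph{second} conclusion of Theorem \ref{theo1}, rather than merely the existence of $\widehat{M}$, is the device converting plurisubharmonicity of the $\rho_j$ into a one-sided statement on $\widehat{M}$. Once that is in hand, the remainder is a comparison of signs together with a dimension count for the normal space to $M$.
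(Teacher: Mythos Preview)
Your proof is correct and follows essentially the same approach as the paper's: both use the second part of Theorem~\ref{theo1} (the plurisubharmonic hull statement) to get $\rho_j\le 0$ on $\widehat{M}$ near $M$, hence $\widetilde{T}\rho_j\ge 0$, and both observe that $\widetilde{T}$ is not tangential to $M$ so at least one $\widetilde{T}\rho_j$ is nonzero. The only difference is that you supply more detail for claim~(b), verifying explicitly that $\widetilde{T}=JN$ is normal to $M$ in $\mathbb{C}^n$ and that the gradients span the normal space, whereas the paper simply asserts that $\widetilde{T}$ is not tangential (which is all that is needed, since if every $\widetilde{T}\rho_j$ vanished then $\widetilde{T}$ would be tangent to the common zero set $M$).
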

\begin{proof}
Fix $z\in M$. Because $\widetilde{T}$ is not tangential to $M$, $\widetilde{T}\rho_{j}(z)\neq 0$ for at least one $j$. On the other hand, because the $\rho_{j}$s are plurisubharmonic, and $\widehat{M}$ is contained in the plurisubharmonic hull of $M$ (with respect to the functions plurisubharmonic in the neighborhood $V$ where the $\rho_{j}$ live), we have $\rho_{j}\leq 0$ on $\widehat{M}$ near $M$, so that $\widetilde{T}\rho_{j}(z)\geq 0$, $1\leq j\leq l$. Combining these two facts gives $\widetilde{T}(\sum_{j=1}^{l}c_{j}\rho_{j})(z) = \sum_{j=1}^{l}c_{j}\widetilde{T}\rho_{j}(z) > 0$. For the second statement, notice that this inequality implies that $-\widetilde{T}(z)\in \widehat{C_{z}}$. Because $-\widetilde{T}(z)$ is the interior normal to $M$ relative to $\widehat{M}$, the statement follows.
\end{proof}
\begin{proof}[Proof of Corollary \ref{plurisubdefining}(Sketch)]
Once $\widehat{M}$ and Lemma \ref{geometric2} are in hand, the proof that $\alpha$ is exact on the null space of the Levi form follows the argument when $M$ is the boundary of a domain (\cite{BoasStraube91a, BoasStraube91}, \cite{Straube10a}, sections 5.8, 5.9), but with some rephrasing. We indicate the steps below, and show in particular how the plurisubharmonicity of the defining functions enters into the argument.

Set $\rho:=\rho_{1}+\hdots+\rho_{l}$; $\rho$ can serve as a (one-sided) defining function for $M$ on $\widehat{M}$, and $(\partial\rho - \overline{\partial}\rho)$  is a nonvanishing multiple of $\eta$. Indeed, if $J^{*}$ denotes the adjoint of $J$ with respect to the pairing between vector fields and forms (see \cite{Boggess91}, page 42), we note that $J^{*}\partial = i\partial$ and $J^{*}\overline{\partial} = -i\overline{\partial}$. Then, we compute:
\begin{multline}\label{comp1}
\;\;\;\;\;\left(\partial\rho - \overline{\partial}\rho\right)(T) = -\left(\partial\rho - \overline{\partial}\rho\right)(J^{2}T) \\= -\left(J^{*}\partial\rho - J^{*}\overline{\partial}\rho\right)(JT) = -id\rho(JT) = d\rho(\widetilde{T}) > 0\;,\;\;\;\;\;
\end{multline}
in view of Lemma \ref{geometric2} (with $c_{1}=\hdots=c_{l}=1$). Noticing that $\left(\partial\rho - \overline{\partial}\rho\right)$ annihilates $T^{1,0}(M)\oplus T^{0,1}(M)$, and $\eta(T)\equiv 1$, we conclude from \eqref{comp1} that on $M$
\begin{equation}\label{def-h}
\left(\partial\rho - \overline{\partial}\rho\right) = e^{-g}\eta \;, \text{for some}\;g\in C^{\infty}(M)\;. 
\end{equation}
The argument in \cite{BoasStraube91} is formulated in terms of vector fields with the properties in \eqref{vectorfields}. In particular, when the defining function $\rho$ is plurisubharmonic, the family of vector fields are given by the (purely imaginary) field dual to $(\partial\rho-\overline{\partial}\rho)$, mod $T^{1,0}(M)\oplus T^{0,1}(M)$ (only the complex tangential parts depend on $\varepsilon$). This field equals $e^{g}T$, in view of \eqref{def-h}. Combining this with the discussion in the proof of Theorem \ref{Theo4} (see in particular \eqref{localfields}, \eqref{atP}) reveals that the
function $h:=g$ is the function we look for; that is, we claim that with this definition of $h$, $dh|_{\mathcal{N}_{z}} = \alpha|_{\mathcal{N}_{z}}$, $z\in M$. Compare also sections 5.8 and 5.9 in \cite{Straube10a}; a detailed argument is in \cite{StraubeSucheston02}, equivalence of (i) and (iv) in the Theorem on page 452. It is convenient to check directly that $h$ so defined has the required properties.

A computation (see \cite{StraubeZeytuncu15}, page 1084) shows that                                                                                                                                                                                                  
\begin{equation}\label{comp2}
 \alpha(\overline{L})(z) = dh(\overline{L})(z) + 2e^{h}i\partial\overline{\partial}\rho(-iT, \overline{L})(z)\;,\;z\in M\;,\;L\in\mathcal{N}_{z}\;.
\end{equation}
Both $iT$ and $J(iT)$ are tangent to $\widehat{M}$; therefore so is the $(1,0)$-part $(iT)_{(1,0)}$ of $iT$, and  $i\partial\overline{\partial}\rho(-iT, \overline{L})(z) = \partial\overline{\partial}\rho\left((-iT)_{(1,0)}, \overline{L}\right)(z)$. Because $\rho$ is plurisubharmonic (since the $\rho_{j}$ are, $1\leq j\leq l$), and therefore its restriction to $\widehat{M}$ is also, $i\partial\overline{\partial}\rho$ is positive semi-definite. Thus, when $L\in \mathcal{N}_{z}$, $\partial\overline{\partial}\rho\left((-iT)_{(1,0)}, \overline{L}\right)(z)=0$ (by Cauchy--Schwarz), and \eqref{comp2} gives
\begin{equation}\label{comp3}
  \alpha(\overline{L})(z) = dh(\overline{L})(z)\;,\;z\in M\;,\;L\in\mathcal{N}_{z}\;,
\end{equation}
as claimed.
\end{proof}

In view of Lemma \ref{symmetric}, the question arises whether a similar symmetry in form levels holds for Sobolev estimates for the complex Green operators. The $T_{q}$ operators from the proof of the lemma do not give this same symmetry, because the error terms of order zero that appear in the intertwining equations \eqref{intertwine1} and \eqref{intertwine2} are now no longer negligible. 

When $M$ is the boundary of a pseudoconvex domain in $\mathbb{C}^{n}$, $T_{q}$ operators that intertwine $\overline{\partial}_{M}$ and $\overline{\partial}_{M}^{*}$ without error terms have been constructed in the appendix in \cite{RaichStraube08}. It then follows that in this case, Sobolev estimates do indeed hold at symmetric levels. This question is not relevant for the results presented in this section, because the sufficient conditions for regularity in these results do not depend on the form level $q$. However, in the case of the $\overline{\partial}$-Neumann operator, the condition that the domain admit a plurisubharmonic 
defining function can be relaxed when the form level $q$ is greater than one: the complex Hessian should have the property that the sum of any $q$ eigenvalues (equivalently, the smallest $q$) be non negative (\cite{HerbigMcNeal05}, \cite{Straube10a}, Corollary 5.25). Thus to prove an analogue of this result for the complex Green operator $G_{q}$ on the boundary of a domain in $\mathbb{C}^{n}$, one should expect the sufficient condition to be that the domain admit a defining function whose complex Hessian has the property that the sum of the smallest $l$ eigenvalues is non negative, where $l$ is the smaller of $q$ and $n-1-q$. When $q>(n-1)/2$, this assumption is stronger than assuming only that the sum of the smallest $q$ eigenvalues is non negative. In this regard, the situation would thus be analogous to that in Theorem \ref{compact-q}. A similar discussion applies with the general sufficient condition for global regularity of the $\overline{\partial}$-Neumann operator $N_{q}$ given in 
\cite{Straube08} (see also \cite{Straube10a}, Theorem 5.22).

The construction in \cite{RaichStraube08} seems to rely very much on the fact that the boundary of a domain is an actual hypersurface in $\mathbb{C}^{n}$. It is not clear whether $\widehat{M}$ can substitute for $\mathbb{C}^{n}$ in the general case, where $M$ is only assumed to be of hypersurface type.

\bigskip
\bigskip
\emph{Acknowledgement:} The authors are grateful to the referee for thoughtful remarks that led to various improvements in the presentation.

\bigskip
\providecommand{\bysame}{\leavevmode\hbox to3em{\hrulefill}\thinspace}

\end{document}